\newcounter{licznik}[section]
\newtheorem{Definition}[licznik]{Definition}
\newtheorem{Proposition}[licznik]{Proposition}
\newtheorem{Theorem}[licznik]{Theorem}
\newtheorem{Lemma}[licznik]{Lemma}
\newtheorem{Corollary}[licznik]{Corollary}
\newtheorem{Remark}[licznik]{Remark}
\newcommand{\fla}{\frac{\lambda}{\alpha}}
\newcommand{\atl}{\frac \alpha {2\lambda}}
\newcommand{\ftla}{\frac{2\lambda}{\alpha}}
\def\flas{\frac \lambda {\alpha^2}}
\newcommand{\R}{\mathbb{R}}
\newcommand{\F}{\mathbb{F}}
\def\hye{\hat y_3}
\def\fotd{\frac 1 {2\d}}
\def\a{\alpha}
\def\la{\lambda}
\def\sta{\sqrt{2\alpha}}
\def\cL{\mathcal{L}}
\def\cH{\mathcal{H}}
\def\lb{\left(}
\def\rb{\right)}
\def\hyo{\hat y_1}
\def\hyt{\hat y_2}
\newtheorem{theorem}{Theorem}[section]
\newcommand{\pd}[2]{\frac{\partial#1}{\partial#2}}
\def\beas{\begin{eqnarray*}}
\def\eeas{\end{eqnarray*}}
\def\cF{\mathcal{F}}
\def\d{\delta}
\def\t{\tau}
\def\cS{\mathcal{S}}
\def\cA{\mathcal{A}}
\def\tx{\tau}
\def\RR{\mathbb R}
\def\EE{\mathsf E}
\def\PP{\mathsf P}
\def\cF{{\cal F}}
\def\cA{{\cal A}}
\def\cS{{\cal S}}
\title{A geometric answer to an open question of singular control with stopping}
\author{John Moriarty\thanks{School of Mathematical Sciences, Queen Mary University of London, Mile End Road,
London E1 4NS, United Kingdom; \texttt{j.moriarty@qmul.ac.uk}}}
\begin{document}
\maketitle

\begin{abstract}
We solve a problem of singular stochastic control with discretionary stopping, suggested as an interesting open problem by Karatzas, Ocone, Wang and Zervos (2000), by providing suitable candidates for the moving boundaries in an unsolved parameter range. We proceed by identifying an optimal stopping problem with similar variational inequalities and inspecting its parameter-dependent geometry (in a sense going back to Dynkin (1965)), which reveals a discontinuity not previously exploited. We thus highlight the potential importance of this geometric information in both singular control and parameter-dependent optimal stopping.
\end{abstract}

\section{Introduction}
\label{sec:intro}

Consider the parameter-dependent family of optimal stopping problems
\begin{equation}\label{eq:osfamily}
V(x;c) := \inf_{\tau \in \cS}\EE\left[e^{-r\tau} \ell(X_\tau;c) \right], \qquad x \in \R,
\end{equation}
where $x \mapsto V(x;c)$ is the {\em value function}, $(X_t)_{t \geq 0}$ is a one-dimensional regular diffusion started at $x$, $\cS$ is the set of stopping times, 
and $(x \mapsto \ell(x;c))_{c \geq 0}$ is a family of {\em obstacles} indexed by a parameter $c$. It is well known that the optimal stopping time may be identified as the first hitting time by $X$ of the set $\cS_c$ on which the obstacle and the value function coincide (see, for example, \cite{Peskir2006}). Parameter-dependent optimal stopping problems for one-dimensional diffusions were recently studied in \cite{Bank2010} where the obstacle depends linearly on the parameter and a convenient representation for the optimal stopping set is derived, as the level set of an auxiliary process. In contrast, our optimal stopping problems come from an open problem of singular stochastic control with discretionary stopping suggested by Karatzas, Ocone, Wang and Zervos \cite{KOWZ00} (which we refer to below as the {\em control problem}) and the parameter dependence is nonlinear. By studying the parameter-dependent geometry in a sense going back to Dynkin \cite{Dynkin} we show that a boundary of $\cS_c$ is discontinuous in the parameter, which enables the control problem to be solved in a previously open case. In this way we highlight the potential importance of the information contained in the geometry of parameter-dependent optimal stopping problems, albeit at the cost of potentially more complex stopping regions than the convenient representations obtained in \cite{Bank2010}.

In associating the control problem to an optimal stopping problem having similar variational inequalities we place it in the context of a series of papers by authors including Karatzas and Shreve \cite{KaratzasShreve84}, \cite{KaratzasShreve85}  and El Karoui and Karatzas \cite{ElKK88}, \cite{ElKK91}. Beginning with \cite{Baldursson97} more explicit approaches have also been developed for this connection, and here we provide explicit analytical expressions for the control problem value function.

\begin{figure}[htbp]
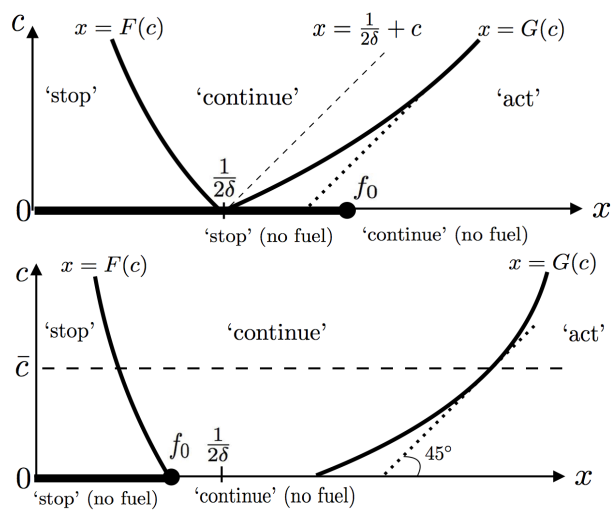

\begin{center}
\includegraphics[scale=1.55]{Fig02a.png}
\includegraphics[scale=1.5]{Fig00a.png}
\caption{Moving boundaries of the control problem for two different parameter choices. The direction of control is south-west (dotted line). Top (obtained below): here $f_0 > \fotd$ and $\la \in (\la^\dagger, \a \d)$. The stopping/continuation boundary is discontinuous at $c = 0$ and repulsion at the right boundary $G$ may be followed either by immediate stopping or by continuation, then stopping at a later time. Bottom (obtained in \cite{KOWZ00}): here $f_0 \leq \fotd$ and $\la \in (\la_*,\la^*]$.
The right boundary $G$ is reflecting for $c>\bar c$ and repelling for $c \in (0,\bar c]$ and repulsion is almost surely followed by continuation, then stopping.  
}
\label{fig:boundaries}
\end{center}
\end{figure}
\subsection{The control problem}

The control problem, which is formally defined in Section \ref{sec:method}, has state space $\{(x,c):x \in \R, c \geq 0\}$ and $x$ represents a {\em position} while $c$ is the {\em fuel level}. One particular solution obtained in \cite{KOWZ00} is illustrated in the bottom panel of Figure \ref{fig:boundaries}. This figure divides the portion $[0,\infty)^2$ of the state space (this restriction is justified in Remark \ref{rem:halfspace} below) into `stop', `continue' and `act' regions in which respectively it is optimal to end the problem, to do nothing, or to adjust the position by consuming fuel. The left moving boundary (between the `stop' and `continue' regions) is referred to as {\em absorbing}. Because of the south-west direction in which the control acts (dotted diagonal line), the right moving boundary $G$ (between the `continue' and `act' regions) is {\em reflecting} for $c>\bar c$ and {\em repelling} for $c \leq \bar c$. 
In particular at the repelling boundary $G(c)$ $(c \in (0, \bar c])$ all available fuel is expended. 


The rest of the paper is organised as follows. After providing background (Section \ref{sec:method}), 
 we identify the family of optimal stopping problems associated to the control problem in Section \ref{sec:methods}. In Sections \ref{sec:nofuel} and \ref{sec:cpos} these problems are solved geometrically, their parameter-dependent boundaries being analysed in Section \ref{sec:sol1fam}. The optimality of these boundaries for the control problem 
  is verified in Section \ref{Sec:verification}.

\section{Background}
\label{sec:method}

For convenience we recall here the setup of the control problem. Consider a probability space $(\Omega, \cF, \PP)$ equipped with a filtration $\mathbb{F}=\{\cF_t, 0 \leq t < \infty \}$ satisfying the usual conditions of right continuity and augmentation by null sets, and let $\cS$ be the set of all $\F$-stopping times. Denote by $\cA$ the class of $\F$-adapted, right-continuous processes $\xi=\{\xi_t, 0 \leq t < \infty\}$ with finite total variation on any compact interval and with $\xi_{0-}=0$. A process $\xi \in \cA$ is considered in its minimal decomposition 
$$\xi_t=\xi^+_t-\xi^-_t, \qquad t \in [0,\infty),$$
as the difference of two non-decreasing processes $\xi^\pm \in \cA$, so that its total variation on the interval $[0,t]$ is 
$$\check \xi_t = \xi_t^+ + \xi_t^-, \qquad t \in [0,\infty],$$
and for $c \in [0,\infty)$ we write 
$$\mathcal{A}(c) = \{\xi \in \cA:\check \xi_\infty \leq c, \text{ a.s.}\}.$$ 
We assume also that $(\Omega, \cF, \PP)$ supports the $\F$-adapted Wiener process $W=\{W_t, 0 \leq t < \infty\}$. Given an {\em initial position} $x \in \R$, {\em initial fuel level} $c \geq 0$ and {\em control process} $\xi \in \cA(c)$, we define the {\em state process}
\begin{eqnarray}
\label{statevariable}
X_t&=&x + W_t, \\
(Y_t, C_t) &=&(X_t + \xi_t, c-\check \xi_t),
\end{eqnarray}
for $t \geq 0$.
The control problem is then defined by the value function
\begin{eqnarray}
Q(x;c) =  \inf_{\xi \in \cA(c),\t \in \cS} 
\EE\left[ \int_0^{\tx}e^{-\alpha t}\lambda Y^2_t dt + \int_{[0,\tx]} e^{-\alpha t} d \check \xi_t + e^{-\alpha\t}\delta Y^2_{\t}\cdot 1_{\{\tau < \infty\}}
\right], \label{eq:defV}
\end{eqnarray}
where $\la>0$, $\a>0$ and $\d > 0$. 

In \cite{KOWZ00} further details on the interpretation and context of the control problem are given. Solutions are obtained by constructing evenly symmetric candidate value functions $\tilde Q(\cdot;c)$ using the associated Hamilton-Jacobi-Bellman equation and applying a verification theorem. This procedure is carried out in the cases $\la \geq \a \d$,  $\la \in (0, \la_*]$ and $\la \in (\la_*, \la^*]$, where
\[
\la_* \leq \la^* = \frac{\a \d}{1 + \frac{\d/\a}{(1/4\d)+(1/\sta)}} < \a \d.
\]
It is convenient to note that in the open case $\la \in (\la^*,\a\d)$ we have $f_0 > \frac 1 {2\d}$, while $f_0 \leq \frac 1 {2\d}$ in the previously solved case $\la \in (0,\la^*]$.
Here $f_0$ is the unique positive solution of the equation $\rho(f_0)=0$ where
\begin{eqnarray}
\rho(x)&:=&x^2+\frac{ 2 x}{\sqrt{2\a}} -\frac{\la / \a}{\a\d-\la}, \label{eq:defrho}
\end{eqnarray}
so that
\begin{eqnarray}
f_0&=&f_0(\la)=\frac 1 \sta \left(\sqrt{\frac{\a \d + \la}{\a \d - \la}}-1 \label{eq:fodef}
\right)>0.
\end{eqnarray} 
The quantity $f_0$ is also the free boundary for the problem without fuel (that is, when $c=0$) and it plays a key role below.
\begin{Remark}\label{rem:halfspace} Below we simplify the control problem as follows: (i) Since the abovementioned verification procedure for candidate value functions which are even in $x$ will be applied (in Section \ref{Sec:verification}), we consider the control problem only on the `half' state space $\{(x,c):x, c \geq 0\}$ as in Figure \ref{fig:boundaries}; (ii) it follows from a simple comparison argument that we need consider only 
monotone controls by taking $\xi^+ \equiv 0$ so that
$(Y_t, C_t) =(X_t - \xi^-_t, c- \xi^-_t)$
 for $t \geq 0$; (iii) we will consider only stopping times which take almost surely finite values (this will be justified by the verification argument).
\end{Remark}

\section{A related family of optimal stopping problems}
\label{sec:methods}

In order to connect the control problem \eqref{eq:defV} to a family of optimal stopping problems, we first discuss a known solution depicted in the bottom panel of Figure \ref{fig:boundaries}. In this case, when the initial state $(x,c)$ lies in the `continue' region  and $c \in [0,\bar c]$, the optimal policy is to wait until either the problem ends with absorption at the left boundary or until all fuel is expended instantaneously (upon reaching the right boundary). This suggests that for such initial values $(x,c)$ the control problem is in fact one of optimally stopping either at the left or right boundary, where the costs incurred at the right boundary are derived using the principle of optimality and the value function with no fuel.
In particular since the corresponding optimal stopping rule then has no direct dependence on the optimal policy for intermediate fuel levels $\tilde c \in (0,c)$, the family of optimal stopping problems parameterised by $c \in [0,\bar c]$ may be solved independently of each other.

When the initial fuel level is $c=0$ we necessarily have $\check \xi \equiv 0$ almost surely. In this case $Y \equiv X$ in \eqref{eq:defV}, (singular) control cannot be exercised and the only available intervention is stopping at the time $\tau$. The control problem then reduces to the following optimal stopping problem:
\begin{eqnarray}
\tilde V_0(x) &:=& \inf_{\t \in \cS} \EE\bigg[\int_0^{\t}e^{-\alpha t}\lambda X^2_t dt + e^{-\alpha\t} \delta X^2_{\t}\bigg]. \label{eq:defq0}
\end{eqnarray}
More generally for $(x,c)$ in the `continue' region with $c \in [0,\bar c]$ as above, in this solution the fuel level remains at the constant value $c$ until the first hitting time of the right boundary. The control policy is therefore $\xi \equiv 0$ in the related optimal stopping problem which, by \eqref{eq:defV} and the principle of optimality, is
\begin{eqnarray}
\tilde V(x;c) &:=& \inf_{\t \in \cS} \EE\bigg[\int_0^{\t}e^{-\alpha t}\lambda X^2_t dt + e^{-\alpha\t}\min(c+Q_0(X_\t-c), \delta X^2_{\t})\bigg]  \label{eq:defqc}
\end{eqnarray}
where $Q_0(x):=Q(x;0)$, $x \in \R$. This formulation as a family of optimal stopping problems is consistent with the candidate solution constructed and verified in Section 10 of \cite{KOWZ00}.

In the open case $f_0 > \fotd$ we will apply our optimal stopping heuristic to construct a candidate solution in an open parameter range (as discussed below in Remark \ref{rem:heurbreak}, the optimal stopping heuristic will break down in the remainder of the open range, in which the control problem remains as an interesting open problem). The moving boundaries of this new solution are illustrated in the top panel of Figure \ref{fig:boundaries}. We also confirm that the value function \eqref{eq:defV} is continuously differentiable across these boundaries, answering a question raised in \cite{KOWZ00}.

\subsection{Solution via obstacle geometry}
We first rewrite problem \eqref{eq:defq0} for $c=0$, and the problems \eqref{eq:defqc} for $c \in (0,\bar c)$, as a family of  problems of the form \eqref{eq:osfamily}. Integrating the `running cost' term $\int_0^{\t}e^{-\alpha t}\lambda X^2_t dt$ by parts we obtain 
\begin{eqnarray}
\label{Wstar2}
V_0(x) &:=& \inf_{\t \geq 0} \EE\big[ e^{-\alpha\tau}h_l(X_{\tau})\big]
= \tilde V_0(x) - \flas - \fla x^2, \label{eq:protonofuel} \\
V(x;c) &:=&  \inf_{\t \geq 0} \EE\big[ e^{-\alpha\tau}h(X_{\tau};c)\big] 
 = \tilde V(x;c)- \flas - \fla x^2, \label{def-V}
\end{eqnarray}
where the {\em obstacles} $h_l$ and $h$ respectively are given by
\begin{eqnarray}
h(x;c)&:=&h_l(x) \wedge h_r(x;c), \label{eq:hdef}\\
h_l(x)&:=& \lb \d - \fla \rb x^2 -\flas, \label{eq:hldef}\\
h_r(x;c)&:=& \tilde V_0(x-c) -\flas - \fla x^2 + c.
\label{def-G0}
\end{eqnarray}
We solve this family of problems using the characterisation via concavity of excessive functions, as follows. Define $\phi_{\a}(x):=e^{-\sqrt{2\a}x}$ and $\psi_{\a}(x):=e^{\sqrt{2\a}x}$ to be the decreasing and increasing solutions respectively of the characteristic equation $(\cL - \a)u =0$, where $\cL:= \frac{1}{2}\frac{d^2}{dx^2}$ is the infinitesimal generator of Brownian motion. As in \cite{Dayanik2003}, eq.\ (4.6), we set
\begin{equation}
\label{def-F}
\Psi(x):=\frac{\psi_{\a}(x)}{\phi_{\a}(x)} = e^{2\sqrt{2\a}x}, \qquad x \in \RR.
\end{equation}
With an obvious terminology we will refer to the point $x$ as being in the {\em natural scale} and the point $y=\Psi(x)$ as being in the {\em transformed scale}.
Further the {\em transformed obstacle} $H$ is defined by:
\begin{align}\label{def-H}
H(y;c):=
\left\{
\begin{array}{ll}
\frac{h(\Psi^{-1}(y);c)}{\phi_\a(\Psi^{-1}(y))}, & y>0,\\[+4pt]
0, & y=0.
\end{array}
\right.
\end{align}
We refer to \eqref{def-H} as the {\em usual transformation} below and with an obvious notation we also define the transformed obstacle $y \mapsto H_r(y;c)$ by replacing $h$ with $h_r$ in \eqref{def-H}, and so on. To establish the geometric solution to \eqref{def-V} used below we now restate results from Proposition 5.12, Remark 5.13 and Section 6 of \cite{Dayanik2003} as follows:
\begin{Proposition}\label{prop:DayKar} 
\begin{enumerate}
\item Fix $c > 0$ and let $W(\,\cdot\,;c)$ be the greatest non-positive convex minorant of $H(\,\cdot\,;c)$, then $V(x;c)=\phi_\alpha(x)W(\Psi(x);c)$ for all $x\in\RR$. Moreover the optimal stopping region is $\cS_c=\Psi^{-1}(\cS^W_c)$, where $\cS^W_c:=\{y>0:W(y;c)=H(y;c)\}$. 
\item Similarly we obtain $V_0(x)=\phi_\alpha(x)W_0(\Psi(x))$ for all $x\in\RR$, where $W_0$ is the greatest non-positive convex minorant of $H_l$. 
\end{enumerate}
\end{Proposition}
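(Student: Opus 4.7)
The plan is to recognise Proposition \ref{prop:DayKar} as a direct specialisation of the general optimal stopping theory for one-dimensional regular diffusions in \cite{Dayanik2003}, so the proof reduces to checking the hypotheses of their framework in our setting and transcribing the conclusions into our notation. The process $X$ is a Brownian motion on $\RR$ with both endpoints natural; the functions $\phi_\alpha$, $\psi_\alpha$ and $\Psi = \psi_\alpha/\phi_\alpha$ are precisely the fundamental solutions and scale function used in \cite{Dayanik2003}. The usual transformation \eqref{def-H} (division by $\phi_\alpha$ followed by the change of scale $y=\Psi(x)$) is exactly the one under which $\alpha$-excessive functions of $X$ become concave in $y$; for our infimum problem the sign flip $h \leftrightarrow -h$ converts ``smallest non-negative concave majorant'' into ``greatest non-positive convex minorant''.

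First I would verify the regularity hypotheses. The obstacle $h_l$ is a polynomial, and $\tilde V_0$ is continuous as the value function of a well-posed optimal stopping problem with continuous obstacle, so $h_r(\cdot;c)$ and therefore $h(\cdot;c)$ are continuous; both grow at most quadratically in $x$. Since $\phi_\alpha$ decays exponentially as $x \to +\infty$ and grows exponentially as $x \to -\infty$ (and $\psi_\alpha$ behaves oppositely), the transformed obstacle satisfies $H(y;c) \to 0$ as $y \to 0^+$, justifying the extension in \eqref{def-H}, and $H(y;c)/y \to 0$ as $y \to \infty$. These are precisely the growth conditions at the natural boundaries required by \cite{Dayanik2003}, and together with the Gaussian tails of $X$ they also yield the trivial upper bound $V(\cdot;c) \leq 0$ from the admissible choice $\tau = +\infty$ (for which $\EE[e^{-\alpha\tau}|h(X_\tau;c)|1_{\{\tau \leq T\}}] \to 0$ as $T \to \infty$).

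With the framework verified, I would invoke Proposition 5.12 and the analysis of Section 6 of \cite{Dayanik2003} (with sign convention inverted for our infimum). This identifies $W(y;c) := V(\Psi^{-1}(y);c)/\phi_\alpha(\Psi^{-1}(y))$ as the greatest non-positive convex minorant of $H(\cdot;c)$, which rearranges to the claimed formula $V(x;c) = \phi_\alpha(x) W(\Psi(x);c)$. The stopping region $\cS_c = \{x \in \RR : V(x;c) = h(x;c)\}$ then equals $\Psi^{-1}(\cS^W_c)$ by the strict monotonicity of $\Psi$ and positivity of $\phi_\alpha$. Part 2 is the special case in which the right obstacle $h_r$ is absent, and the identical argument applies with $h_l$ replacing $h$. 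The main technical point is the verification at the natural boundaries $\pm\infty$: the quadratic growth of the obstacles must be dominated by the exponential behaviour of $\phi_\alpha$ and $\psi_\alpha$ to both validate the boundary value $H(0;c)=0$ and identify the correct class (non-positive convex minorants) over which the characterisation is taken; once these are in hand the remainder is a direct transcription.
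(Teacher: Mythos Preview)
Your proposal is correct and matches the paper's approach: the paper does not give an independent proof of this proposition but simply restates it as a direct consequence of Proposition~5.12, Remark~5.13 and Section~6 of \cite{Dayanik2003}. Your write-up goes further than the paper by explicitly checking the boundary growth conditions (quadratic obstacles against exponential $\phi_\alpha,\psi_\alpha$) and the sign convention for the infimum, which the paper leaves implicit; this is a welcome addition but not a different method.
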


\begin{Remark} Below we use the term `minorant' to mean `greatest non-positive convex minorant'. 
\label{rem:minorantword}
\end{Remark}

Before applying this method of solution 
in Sections \ref{sec:nofuel} and \ref{sec:cpos} below we recall the following useful result from Section 6 of \cite{Dayanik2003}:

\begin{Lemma} \label{lem:geomlemma}
Defining the sign of $0$ to be $0$ we have
\begin{enumerate}
\item $H_{y}(y;c)$ has the same sign as $\left(\frac{h}{\phi}\right)_x(\Psi^{-1}(y);c)$,
\item $H_{yy}(y;c)$ has the same sign as $(\cA - \alpha) h(\Psi^{-1}(y);c)$.
\end{enumerate}
\end{Lemma}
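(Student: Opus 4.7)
The plan is a direct chain-rule calculation; the only ingredient beyond calculus is the observation that the Wronskian $W := \psi_\alpha'\phi_\alpha - \psi_\alpha\phi_\alpha'$ is constant (here equal to $2\sqrt{2\alpha}$) because $\phi_\alpha,\psi_\alpha$ both solve $(\cL-\alpha)u=0$ with $\cL u = u''/2$. Throughout, set $x := \Psi^{-1}(y)$ and $u(x;c) := h(x;c)/\phi_\alpha(x)$, so that by \eqref{def-H} we have $H(y;c) = u(x;c)$ for $y>0$.

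For part (1), the chain rule gives $H_y(y;c) = u_x(x;c)/\Psi'(x)$. A short computation yields $\Psi'(x) = W/\phi_\alpha(x)^2 > 0$, so $H_y$ has the same sign as $u_x = (h/\phi_\alpha)_x$, with the convention that a vanishing derivative corresponds to sign $0$ on both sides of the identity.

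For part (2), differentiate once more to get
\begin{equation*}
H_{yy}(y;c) \;=\; \frac{u_{xx}(x;c)\,\Psi'(x) - u_x(x;c)\,\Psi''(x)}{\Psi'(x)^3}.
\end{equation*}
From $\Psi' = W/\phi_\alpha^2$ one obtains $\Psi'' = -2W\phi_\alpha'/\phi_\alpha^3$. The expected obstacle is the resulting algebra: substituting these expressions and expanding $u_x$ and $u_{xx}$ in terms of $h,h',h'',\phi_\alpha,\phi_\alpha',\phi_\alpha''$ produces several terms, but those containing $\phi_\alpha'$ cancel in pairs, leaving the compact identity
\begin{equation*}
H_{yy}(y;c) \;=\; \frac{\phi_\alpha(x)^2\bigl[\phi_\alpha(x)\,h_{xx}(x;c) - h(x;c)\,\phi_\alpha''(x)\bigr]}{W^2}.
\end{equation*}

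Finally, $\phi_\alpha'' = 2\alpha\phi_\alpha$ gives $\phi_\alpha h_{xx} - h\phi_\alpha'' = 2\phi_\alpha(\cL-\alpha)h$, so $H_{yy}(y;c) = (2\phi_\alpha(x)^3/W^2)\,(\cL-\alpha)h(x;c)$; since $\phi_\alpha>0$, this prefactor is strictly positive and the sign claim follows (identifying the $\cA$ of the statement with the generator $\cL$ introduced earlier). The only subtlety is that $h = h_l \wedge h_r$ is merely piecewise smooth, so both identities are to be read pointwise on the open subsets where $h$ is $C^2$; on these subsets the calculation above applies verbatim, and the sign-of-$0$ convention absorbs the remaining degenerate cases.
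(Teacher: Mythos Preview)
Your proof is correct. The chain-rule computation is carried out cleanly, the Wronskian observation is the right tool, and the final identity $H_{yy}=(2\phi_\alpha^3/W^2)(\cL-\alpha)h$ is exactly what is needed. You also correctly flag that the $\cA$ in the statement is a typographical slip for the generator $\cL$ defined just above in the paper.

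As for comparison: the paper does not actually prove this lemma. It is stated as a recalled result from Section~6 of Dayanik and Karatzas (2003) and simply cited. Your approach therefore differs in that it supplies a complete, self-contained derivation rather than deferring to the literature. What this buys is independence from the external reference and an explicit formula for $H_{yy}$ (not merely a sign statement), which can be useful if one later wants quantitative estimates. The paper's approach keeps the exposition shorter by outsourcing a standard computation. Your remark that the identities hold pointwise on the open sets where $h$ is $C^2$ is a sensible caveat, though in the paper's subsequent applications the lemma is only ever invoked on $h_l$, $h_{r1}$, or $h_{r2}$ separately, where smoothness is not in question.
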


\section{Two regimes without fuel}
\label{sec:nofuel}

When the initial fuel level is $c=0$, the connection to optimal stopping is obvious and the stopping problem is standard. However we confirm the solution here using Proposition \ref{prop:DayKar} since its geometry 
is used in Section \ref{sec:cpos} to study the more involved family of problems given by \eqref{def-V}. The optimal stopping rule in this section will have the following two regimes:
\begin{enumerate}
\item[i)] continue, if $X_t>f_0$,
\item[ii)] stop immediately, if $X_t \in [0, f_0]$.
\end{enumerate}

\begin{Lemma}[cf. Figure \ref{fig:1}]\label{lem:Hlprops} For any $\la \in (0,\a \d)$ 
the function $y \mapsto H_l(y)$ is:
\begin{enumerate}
\item continuous on $[1,\infty)$ with $H_l(1)=-\flas$ and $H_l'(1)=-\frac{\la}{2\a^2}$,
\item 
strictly convex on $[1,\Psi(f_0)]$,
\item \label{lempart:413} strictly decreasing on $[1,\Psi(f_0))$ and strictly increasing on $(\Psi(f_0),\infty)$
\end{enumerate}
\end{Lemma}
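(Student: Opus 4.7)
The plan is to apply Lemma \ref{lem:geomlemma} throughout, which reduces every claim to a sign computation for $h_l$ or $(h_l/\phi_\a)'$ evaluated at $x = \Psi^{-1}(y) \geq 0$. Since $\Psi$ is a smooth, strictly increasing bijection from $\RR$ onto $(0,\infty)$ with $\Psi(0) = 1$, the intervals $[1,\Psi(f_0)]$ and $[\Psi(f_0),\infty)$ in the transformed scale correspond to $[0, f_0]$ and $[f_0, \infty)$ in the natural scale, so the $x$-sign information translates directly to the $y$-claims.

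For part 1, $H_l$ is a composition of $h_l/\phi_\a$ and $\Psi^{-1}$, both smooth on $(0,\infty)$, so continuity on $[1,\infty)$ is automatic. Substituting $x=\Psi^{-1}(1)=0$ gives $H_l(1) = h_l(0)/\phi_\a(0) = -\la/\a^2 = -\flas$. For the derivative I would apply the chain rule
\[
H_l'(1) = (h_l/\phi_\a)'(0) \cdot (\Psi^{-1})'(1),
\]
use $\Psi'(0)=2\sta$ so $(\Psi^{-1})'(1)= 1/(2\sta)$, and compute $(h_l/\phi_\a)'(0) = -\sta\la/\a^2$ directly from $h_l'(0)=0$ and $\phi_\a'(0)=-\sta$. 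This gives $H_l'(1) = -\la/(2\a^2)$.

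For part 2 I would observe the clean one-line simplification
\[
(\cL - \a)h_l(x) = (\d - \ffla) + \ffla - (\a\d-\la)x^2 = \d - (\a\d - \la)x^2,
\]
which is strictly positive precisely when $x^2 < \d/(\a\d-\la)$. Using $\rho(f_0)=0$ from \eqref{eq:defrho},
\[
f_0^2 = \frac{\ffla}{\a\d-\la} - \frac{2f_0}{\sta} < \frac{\ffla}{\a\d-\la} < \frac{\d}{\a\d-\la},
\]
the final inequality coming from $\la/\a < \d$. Hence $(\cL - \a)h_l > 0$ on $[0,f_0]$, and Lemma \ref{lem:geomlemma}(2) yields $H_l''>0$ on $(1, \Psi(f_0))$, which gives strict convexity on the closed interval $[1, \Psi(f_0)]$.

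For part 3 I would expand and factor
\[
(h_l/\phi_\a)'(x) = \bigl[h_l(x) e^{\sta x}\bigr]' = \sta \lb \d - \ffla \rb e^{\sta x}\, \rho(x),
\]
so the derivative has the same sign as $\rho(x)$. Since $\rho$ is a convex quadratic with $\rho(0) = -\frac{\ffla}{\a\d-\la}<0$ and with unique positive root $f_0$, $\rho$ is strictly negative on $[0,f_0)$ and strictly positive on $(f_0,\infty)$. By Lemma \ref{lem:geomlemma}(1) this sign transfers to $H_l'(y)$ on the corresponding $y$-intervals, delivering the claimed monotonicity.

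I do not anticipate any genuine obstacle; the proof is driven by two clean observations — the simplification $(\cL-\a)h_l(x)=\d-(\a\d-\la)x^2$ and the factorisation of $(h_l/\phi_\a)'(x)$ through the very polynomial $\rho$ defining $f_0$ — after which everything is routine.
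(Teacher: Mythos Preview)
Your proposal is correct and follows essentially the same approach as the paper: both rely on Lemma~\ref{lem:geomlemma}, both identify the key simplifications $(\cL-\a)h_l(x)=\d-(\a\d-\la)x^2$ and $(h_l/\phi_\a)'(x)=\sta(\d-\ffla)e^{\sta x}\rho(x)$, and both deduce the sign information from $\rho(f_0)=0$. The only cosmetic differences are that the paper computes $H_l(1)$ and $H_l'(1)$ from the explicit formula \eqref{eq:Hl} rather than via the chain rule, and verifies positivity of $(\cL-\a)h_l$ at $x=f_0$ by evaluating it directly as $(\a\d-\la)(1/\a+2f_0/\sta)$ rather than bounding $f_0^2$ as you do.
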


\begin{proof}
The first part follows since
\begin{eqnarray}\label{eq:Hl}
H_l(y)=\sqrt{y}\left(-\flas + \frac{\delta - \lambda / \alpha}{8\alpha}(\ln y)^2\right)
, & y > 0.
\end{eqnarray} 
The second part follows from Lemma \ref{lem:geomlemma}, since
\begin{eqnarray}
(\cL - \alpha) h_l(x)&=& \label{hlcc}
\d - (\a \d - \la)x^2,
\end{eqnarray}
which is concave in $x$, positive when $x=0$ and equal to $(\a \d -\la) \left(\frac 1 \a+2\frac{f_0}{\sta}\right)>0$ when $x=f_0$. The third part follows from the same lemma since
\begin{equation}
\left(\frac{h_l}{\phi}\right)_x \frac {e^{-x\sta}} {\sta (\d-\frac{\la}{\a})}=
\begin{array}{lr}
\rho(x)\end{array}
\label{eq:derivs}
\end{equation}
with $\rho(x)$ as in \eqref{eq:defrho}, which is negative when $x \in [0,f_0)$ and positive when $x>f_0$.
\hfill$\Box$ 
\end{proof}

\begin{figure}[htbp]
\begin{center}
\includegraphics[scale=1.1]{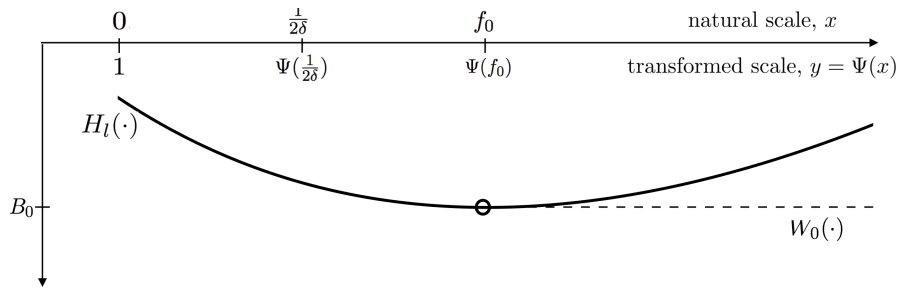}
\caption{A geometric representation of the optimal stopping problem $V_0$ of \eqref{eq:protonofuel}, showing the transformed obstacle $H_l$ on $[1,\infty)$ (solid curve) and its greatest nonpositive convex minorant $W_0$ (dashed curve). The two curves coincide for $y \in [1,\Psi(f_0)]$ and the tangency point is shown (circular marker). For convenience the natural scale $x=\Psi^{-1}(y)$ is also given, and the stopping region in the natural scale is $[0,f_0]$.}
\label{fig:1}
\end{center}
\end{figure}

\vspace{1mm}
Remark \ref{rem:halfspace} suggests that it is sufficient to study the control problem only for $x \geq 0$. The following proof shows that it is also sufficient to study optimal stopping problem \eqref{eq:defq0} only for $x \geq 0$. Further, by restricting the value of $c$ (to $c<c_0$, see Proposition \ref{existencegeometric}
) the same rationale will apply in Section \ref{sec:cpos}.

\begin{Corollary}\label{cor:czerosol}
Let $\tilde W: [0,\infty) \to \R$ be the greatest nonpositive convex minorant of $H_l:[0,\infty) \to \R$. The restriction of $\tilde W$ to the domain $[1,\infty)$ is
\begin{equation}
\tilde W|_{[1,\infty)}(y) = W_0(y):=
\left\{\label{eq:Wzero}
\begin{array}{ll}
H_l(y), & y \in [1,\Psi(f_0)], \\[+1mm]
B_0, 
& y > \Psi(f_0),
\end{array}
\right.
\end{equation}
where $B_0=- \frac{2f_0}{\a \sta}(\a \d - \la)e^{f_0\sta} < 0$.
\end{Corollary}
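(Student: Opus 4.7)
The plan is to establish $\tilde W|_{[1,\infty)} = W_0$ by sandwiching from above and below. First, substituting the defining relation $\rho(f_0) = 0$ into $h_l(f_0)/\phi_\alpha(f_0)$ shows after a brief algebraic rearrangement that $H_l(\Psi(f_0)) = B_0$; together with Lemma~\ref{lem:Hlprops}(\ref{lempart:413}), this identifies $B_0$ as the global minimum of $H_l$ on $[1,\infty)$ and in particular gives $H_l'(\Psi(f_0)) = 0$. For the upper bound I use a standard fact: any convex function on $[0,\infty)$ bounded above by $0$ must be non-increasing, since a positive one-sided slope anywhere would force unbounded linear growth. Applied to $\tilde W$, this yields
\[
\tilde W(y) \leq \tilde W(\Psi(f_0)) \leq H_l(\Psi(f_0)) = B_0 = W_0(y), \qquad y \geq \Psi(f_0),
\]
while the minorant property gives $\tilde W \leq H_l = W_0$ directly on $[1,\Psi(f_0)]$.

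For the matching lower bound I propose a convex nonpositive function $U\colon[0,\infty)\to\R$ with $U|_{[1,\infty)} = W_0$; provided $U$ is also a minorant of $H_l$, maximality of $\tilde W$ gives $W_0 = U \leq \tilde W$ on $[1,\infty)$. The natural candidate extends $W_0$ below $y=1$ along the tangent to $H_l$ at $y=1$: set $U(y) := H_l(1) + H_l'(1)(y-1)$ for $y \in [0,1]$. One-sided slopes of $U$ agree at $y=1$ (both equal $-\lambda/(2\alpha^2)$) and at $y=\Psi(f_0)$ (both equal $0$, since $H_l'(\Psi(f_0)) = 0$), so $U$ is $C^1$ with non-decreasing derivative and hence convex; nonpositivity is immediate from $U(0) = -\lambda/(2\alpha^2) < 0$.

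The main obstacle is verifying that $U \leq H_l$ on the auxiliary interval $[0,1]$, where neither convexity of $H_l$ nor the minorant inequality is available a priori. My intended argument splits $[0,1]$ at the point $y_1 := \Psi(-\sqrt{\delta/(\alpha\delta-\lambda)})$ where $H_l$ changes concavity. On $[y_1,1]$, Lemma~\ref{lem:geomlemma} together with $(\cL-\alpha)h_l(0) = \delta > 0$ shows that $H_l$ is convex, and the matched conditions $U(1) = H_l(1)$, $U'(1) = H_l'(1)$ then force $U \leq H_l$ by the standard tangent-from-below estimate for convex functions. On $[0,y_1]$ the explicit formula \eqref{eq:Hl} shows $H_l \geq 0$ because the $(\ln y)^2$ term dominates, while $U$ remains strictly negative. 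Checking that these two subintervals together cover $[0,1]$ reduces to $y_1 \leq y^*$, where $y^* \in (0,1)$ is the unique zero of $H_l$ there; this inequality in turn reduces to the standing hypothesis $\lambda \leq \alpha\delta$ by a direct comparison of the two explicit values. Granted $U \leq H_l$, the maximality of $\tilde W$ completes the proof.
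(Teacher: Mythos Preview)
Your proof is correct and shares the paper's core idea: extend $W_0$ to $[0,1]$ along the tangent to $H_l$ at $y=1$, and show this extension lies below $H_l$ on $[0,1]$. The differences are in execution. For the upper bound $\tilde W\le W_0$ on $[\Psi(f_0),\infty)$ you invoke the general fact that a nonpositive convex function on $[0,\infty)$ is non-increasing; the paper instead reduces directly to the minorant of the restriction $H_l|_{[1,\infty)}$ and appeals to the geometry of Figure~\ref{fig:1}. For the tangent-below inequality on $[0,1]$, the paper argues more qualitatively: on the convex piece $[y_1,1]$ the tangent lies below, while on the concave piece $[0,y_1]$ the difference $H_l-\text{tangent}$ is concave with nonnegative endpoint values (using $H_l(0)=0$ and the tangent's intercept $-\la/(2\a^2)$), hence nonnegative throughout. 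Your version replaces the concave-endpoint argument on $[0,y_1]$ by the stronger pointwise claim $H_l\ge 0$ there, which you correctly reduce to $y_1\le y^*$ and hence to $\la<\a\d$. (Your phrase ``these two subintervals together cover $[0,1]$'' is slightly misleading, since $[0,y_1]\cup[y_1,1]$ always covers $[0,1]$; what actually requires $y_1\le y^*$ is the validity of $H_l\ge 0$ on $[0,y_1]$.) Both routes work; the paper's avoids the explicit $y_1$--$y^*$ comparison, while yours makes the sandwich inequality fully explicit.
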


\begin{proof}
We first argue that the tangent to $H_l(\cdot)$ when $y=1$ lies strictly below $H_l$ for all $y \in [0,1)$. From \eqref{def-H} and \eqref{eq:Hl} we have that $H_l$ is continuous on $[0,1]$ and $H(0)=0$. Further from \eqref{hlcc}, as $y$ increases from $0$ to $1$ (equivalently as $x$ increases from $-\infty$ to $0$), $H_l$ is initially strictly concave and then strictly convex. Combining this with the properties $H_l(1)=-\flas$ and $H_l'(1)=-\frac{\la}{2\a^2}$ establishes this claim. 

The restriction $\tilde W|_{[1,\infty)}$ is therefore the greatest nonpositive convex minorant of the restriction $H_l|_{[1,\infty)}$. This is 
illustrated in Figure \ref{fig:1} and is given by the function $W_0:[1,\infty) \to \R$ defined  in \eqref{eq:Wzero}. Finally, the value of $B_0$ follows from the definition of $H_l$ (it is convenient to use \eqref{def-H}) and \eqref{eq:defrho}.
\end{proof}
\hfill$\Box$

\begin{Remark}\label{rem:solveos}

The function $W_0(\cdot)$ is continuously differentiable. It follows from Proposition \ref{prop:DayKar} and \eqref{eq:protonofuel} that $\tilde V_0$ is also continuously differentiable and
\begin{eqnarray}
\tilde V_0(x) &=& 
\left\{
\begin{array}{ll}
\d x^2, & 0 \leq x \leq f_0, \\[+1mm]
 \flas + \fla x^2 + B_0 e^{-x\sta}, & x > f_0.
\end{array}
\right.
\label{eq:qzero}
\end{eqnarray}
\end{Remark}

\section{Inclusion of fuel}
\label{sec:cpos}
The two regimes in the above optimal stopping rule will now play a key role in the control problem. In particular the new repelling boundary constructed below 
propels the state into {\em either} the `stop' regime {\em or} the `continue' regime from Section \ref{sec:nofuel}. This contrasts with the previously solved range $\la \in (0,\la^*]$ in which the repelling boundary never propels the state directly into the `stop' regime (see Figure \ref{fig:boundaries}); it also contrasts with the previously solved range $\la \geq \a\d$, in which repulsion always propels the state into the `stop' regime (see Section 5 of \cite{KOWZ00} for further details).

Fixing $\a>0$, $\d>0$ and recalling that the open range is $\la \in (\la^*,\a\d)$ it may be checked  
(see \eqref{eq:fodef} and the preceding discussion) that the equation 
$f_0(\la)=\atl$ 
holds at a unique point $\la^\dagger \in (\la^*,\a\d)$. It will become clear below that our optimal stopping heuristic does not apply in the case  $\la \in (\la^*,\la^\dagger)$, and this parameter range remains an interesting open problem.
Below we therefore address the case $\la \in (\la^\dagger, \a \d)$, or equivalently $f_0 > \atl$, via the
family \eqref{def-V} as $c$ varies. Since $\la \in (\la^\dagger,\a\d)$ we have the inequality
\begin{eqnarray}
  \frac 1 {2\d} < \atl < f_0.& \label{eq:longineq}
\end{eqnarray}
We first evaluate the obstacle $h(\cdot;c)$. From \eqref{def-G0} the function $h_r(\cdot;c)$ is continuously differentiable and from \eqref{eq:qzero} it divides at $x=f_0+c$ into two parts corresponding to the two regimes of Section \ref{sec:nofuel}:
 \begin{eqnarray}
h_r(x;c) = \left\{
\begin{array}{ll}
h_{r1}(x;c):=  \d c(c-2x) + (\d - \la / \a) x^2 + c -\flas, & x \in [0,f_0+c],\\
h_{r2}(x;c):= \fla c(c-2x) +  c+ B_0 e^{-(x-c) \sta}, & x \geq f_0+c.
\end{array}
\right.
\label{def-G1}
\end{eqnarray}
\begin{figure}[htbp]
\begin{center}
\includegraphics[scale=1.07]{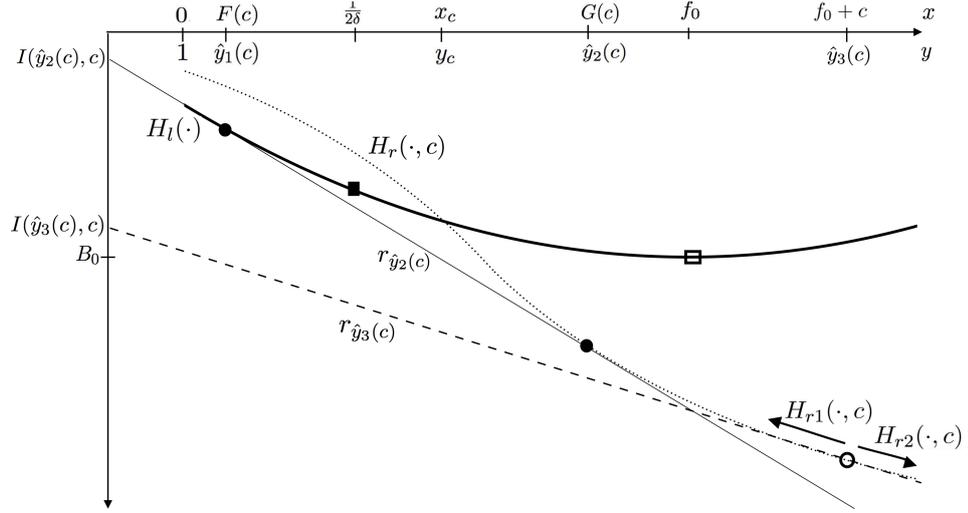}
\caption{A geometric representation of the optimal stopping problem $V(x;c)$ of \eqref{def-V} for $c>0$ fixed and sufficiently small, showing the transformed obstacles $y \mapsto H_l(y)$ (solid curve) and $y \mapsto H_r(y;c)$ (dotted curve). Their common tangent $r_{\hyt(c)}$ (solid line) and the tangent $r_{\hye(c)}$ (dashed line) are also shown, together with their intercepts at the vertical axis (see Definition \ref{def:tangents}). 
As $c \to 0$, $H_r(\cdot;c)$ converges pointwise to the minorant $W_0(\cdot)$ shown in Figure \ref{fig:1}  (see Lemma \ref{lem:minorant}) and $H(\cdot;c):=H_l(\cdot) \wedge H_r(\cdot;c)$ also converges pointwise to $W_0(\cdot)$. Since the filled circular markers converge to the filled square marker (Lemma \ref{cor:1/2d0}), the minorant $W(\cdot;c)$ of $H(\cdot;c)$ also converges pointwise to $W_0(\cdot)$ and the stopping region $[0,F(c)] \cup [G(c),\infty)$ of $V(\cdot;c)$  converges to $\R$ as $c \to 0$. However the stopping region of $V_0(\cdot)$ is $[0,f_0]$ (the unfilled circular marker converges to the unfilled square marker as $c \to 0$). 
}
\label{fig:Fig1synth}
\end{center}
\end{figure}
In particular we have:
 \label{sec:sol1} 
 \begin{Lemma}
 \label{lem:littleh}
Fixing $c>0$ and defining $x_c=\frac 1 {2\d}+\frac c 2$, problem \eqref{def-V} has obstacle:
\begin{eqnarray}
h(x;c) = \left\{
\begin{array}{ll}
h_l(x) & \text{ for } x \leq x_c, \\
h_{r1}(x;c) & \text{ for } x \in [x_c, f_0 + c], \\
h_{r2}(x;c) & \text{ for } x \geq f_0 + c. \\
\end{array}
\right. \label{eq:hbigf0}
\end{eqnarray}
\end{Lemma}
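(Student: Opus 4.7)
Since $h(\cdot;c) = h_l(\cdot) \wedge h_r(\cdot;c)$ by \eqref{eq:hdef}, and \eqref{def-G1} already splits $h_r(\cdot;c)$ into $h_{r1}$ on $[0,f_0+c]$ and $h_{r2}$ on $[f_0+c,\infty)$, my plan is to compare $h_l$ against each piece of $h_r$ on its own domain and locate the (single) crossing inside each piece.

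On $[0,f_0+c]$ I would substitute from \eqref{eq:hldef} and \eqref{def-G1}, watching the quadratic-in-$x$ terms and the constant $-\lambda/\alpha^2$ cancel, leaving the affine remainder
\begin{equation*}
h_{r1}(x;c) - h_l(x) \;=\; \delta c(c-2x) + c \;=\; c\bigl(1+\delta c - 2\delta x\bigr).
\end{equation*}
This changes sign exactly at $x = \tfrac{1}{2\delta}+\tfrac c 2 = x_c$, delivering the first two cases of \eqref{eq:hbigf0} in one stroke. A quick verification via \eqref{eq:longineq} that $x_c < f_0 + c$ (equivalently $\tfrac{1}{2\delta} < f_0$) confirms that both resulting subintervals $[0,x_c]$ and $[x_c,f_0+c]$ are non-degenerate and lie inside the domain of $h_{r1}$.

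On the outer interval $[f_0+c,\infty)$ I would avoid tangling with the exponential term $B_0 e^{-(x-c)\sqrt{2\alpha}}$ appearing in $h_{r2}$ and use instead the trivial immediate-stopping bound $\tilde V_0(y) \leq \delta y^2$ available from \eqref{eq:defq0} with $\tau\equiv 0$, applied with $y=x-c$ in the representation \eqref{def-G0}; this yields
\begin{equation*}
h_r(x;c) \;\leq\; \delta (x-c)^2 - \frac{\lambda}{\alpha^2} - \frac{\lambda}{\alpha}x^2 + c,
\end{equation*}
and subtracting this upper bound from $h_l$ collapses to the elementary $c(2\delta x - \delta c - 1)$, strictly positive on $[f_0+c,\infty)$ since $2\delta f_0 > 1$ by \eqref{eq:longineq}. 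Hence $h_l > h_{r2}$ on that interval and the third case of \eqref{eq:hbigf0} follows. The whole argument is essentially bookkeeping, so the only obstacle I anticipate is keeping sign conventions straight (in particular $B_0<0$ and $\delta - \lambda/\alpha > 0$); the immediate-stopping upper bound is deliberately chosen to sidestep these on the outer piece.
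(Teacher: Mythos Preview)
Your argument is correct. On $[0,f_0+c]$ it is literally the paper's computation: both you and the paper write $h_{r1}-h_l = c(1+\delta(c-2x))$, locate the root at $x_c$, and check $x_c<f_0+c$ via $\frac{1}{2\delta}<f_0$. The only divergence is on $[f_0+c,\infty)$. The paper handles this interval by noting that $h_r-h_l$ is globally concave (a one-line second-derivative check using $B_0<0$ and $\delta>\lambda/\alpha$), so that once it is decreasing through zero at $x_c$ it stays negative thereafter. You instead invoke the immediate-stopping bound $\tilde V_0(y)\le \delta y^2$, which via \eqref{def-G0} is precisely the statement $h_r \le h_{r1}$ everywhere; the comparison with $h_l$ then collapses back to the same affine expression already shown negative for $x>x_c$. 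Your route trades a curvature computation for a structural observation about the stopping problem, and in particular avoids ever touching $B_0$ or the exponential in $h_{r2}$; the paper's route is slightly more self-contained analytically. Both are equally short.
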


\begin{proof}
To determine the minimum $h_l(x) \wedge h_r(x;c)$ in \eqref{eq:hdef}, it is easily checked from \eqref{def-G1} that the continuously differentiable function $x \mapsto h_r(x;c) - h_l(x)$ is concave. Also for $ x < f_0+c$ we have
\begin{equation}
h_r(x;c) - h_l(x) = h_{r1}(x;c) - h_l(x) 
= c(1+\d(c-2x)), \label{eq:hrhl}
\end{equation}
which is strictly decreasing in $x$ and has the root 
$x_c:=\frac 1 {2\d}+\frac c 2 
< f_0+c.$
\end{proof}
\hfill $\Box$

Defining 
\begin{eqnarray*}
y_c & = & \Psi(x_c), \\
\hye(c)&=&\Psi(f_0+c),
\end{eqnarray*}
the next lemma establishes the relevant geometry of the transformed obstacle $H_r(\cdot;c)$, which is illustrated in Figure \ref{fig:Fig1synth}.
\begin{Lemma}\label{lem:Hconvexity}
The transformed obstacle $y \mapsto H_r(y;c)$ is continuously differentiable and there exists $y_v(c) \in [y_c, \hye(c))$ such that $y \mapsto H_r(y;c)$ is strictly convex on $(y_v(c),\infty)$ and strictly concave on $(y_c,y_v(c))$ (if $y_v(c)>y_c$).  
Further we have
\begin{eqnarray}
\lim_{y \to \infty}H_r(y;c) &=& -\infty, \label{eq:Hneglim} \\
\lim_{y \to \infty}\pd {H_r} y (y;c) &=& 0. \label{eq:hr2lims}
\end{eqnarray}
\end{Lemma}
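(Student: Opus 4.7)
The plan is to combine the $C^1$-smoothness of $h_r(\,\cdot\,;c)$---which follows from that of $\tilde V_0$ at $f_0$ recorded in Remark~\ref{rem:solveos} together with the representation \eqref{def-G0}---with the fact that $\Psi$ is a $C^\infty$-diffeomorphism onto $(0,\infty)$, so the transformation \eqref{def-H} preserves $C^1$ regularity and yields continuous differentiability of $H_r(\,\cdot\,;c)$. The convexity structure is then reduced by Lemma~\ref{lem:geomlemma}(ii) to the sign of $(\cL-\alpha)h_r(x;c)$ where $x=\Psi^{-1}(y)$.

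On the outer piece $x\geq f_0+c$ a direct computation from \eqref{def-G1} gives
\[
(\cL-\alpha)h_{r2}(x;c) \;=\; c\bigl(2\lambda x-\lambda c-\alpha\bigr),
\]
which is strictly positive on $[f_0+c,\infty)$ thanks to $f_0>\atl$ from \eqref{eq:longineq}; hence $H_r(\,\cdot\,;c)$ is strictly convex on $(\hyt(c),\infty)$, wait I mean $(\hye(c),\infty)$. On the middle piece $x\in[x_c,f_0+c]$ one has $(\cL-\alpha)h_{r1}(x;c)=(\delta-\ffla)-\alpha\,h_{r1}(x;c)$, a strictly concave (downward-opening) parabola in $x$, since $h_{r1}(\,\cdot\,;c)$ is a strictly convex parabola. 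The crucial observation is that, cancelling the $-\alpha h$ contributions via continuity of $h_r$ at $f_0+c$ and substituting the explicit value of $B_0$ from Corollary~\ref{cor:czerosol} (equivalently, using $\rho(f_0)=0$), the jump in $h_r''$ at $f_0+c$ yields
\[
(\cL-\alpha)h_{r1}(f_0+c;c) \;=\; (\cL-\alpha)h_{r2}(f_0+c;c) + (\delta-\ffla)\bigl(1+f_0\,\sta\bigr) \;>\; 0.
\]

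Since a strictly concave function on a closed bounded interval attains its minimum at an endpoint, the sign of $(\cL-\alpha)h_{r1}(\,\cdot\,;c)$ on $[x_c,f_0+c]$ is controlled by its value at $x_c$. If $(\cL-\alpha)h_{r1}(x_c;c)\geq 0$, this quantity is strictly positive on $(x_c,f_0+c]$ and one takes $y_v(c):=y_c$; otherwise strict concavity and the intermediate value theorem produce a unique zero $x_v(c)\in(x_c,f_0+c)$, and one sets $y_v(c):=\Psi(x_v(c))\in(y_c,\hye(c))$. In either case the sign pattern ``$\le 0$ then $>0$'' on $[x_c,\infty)$ translates via Lemma~\ref{lem:geomlemma}(ii) into strict concavity on $(y_c,y_v(c))$ (when nonempty) and strict convexity on $(y_v(c),\infty)$; convexity extends across the jump of $H_r''$ at $\hye(c)$ because $H_r(\,\cdot\,;c)\in C^1$ there with $H_r''>0$ on both sides.

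Finally the limits follow by direct asymptotic analysis: since $h_{r2}(x;c)=-2(\ffla)cx+O(1)$ as $x\to\infty$, we have $H_r(y;c)=h_{r2}(x)/\phi_\alpha(x)\sim -2(\ffla)c\,x\,e^{x\sta}\to -\infty$, proving \eqref{eq:Hneglim}; and the chain rule gives
\[
\pd{H_r}{y}(y;c) \;=\; \frac{\bigl(h_r'(x)+\sta\,h_r(x)\bigr)\phi_\alpha(x)}{2\sta},
\]
where the bracket grows at most linearly in $x$ while $\phi_\alpha(x)=e^{-x\sta}$ decays exponentially, proving \eqref{eq:hr2lims}. The principal obstacle is the displayed identity for $(\cL-\alpha)h_{r1}(f_0+c;c)$: although elementary, it requires carefully unfolding the formula for $B_0$ (equivalently, $\rho(f_0)=0$) to pin down the jump in $h_r''$ across $f_0+c$ and thereby force the sign pattern ``$-$ then $+$'' on $[x_c,\infty)$ rather than its opposite, which would have contradicted the structure of the claimed $y_v(c)$.
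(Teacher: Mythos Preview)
Your proof is correct and follows essentially the same route as the paper: reduce the convexity claim to the sign of $(\cL-\alpha)h_r$ via Lemma~\ref{lem:geomlemma}(ii), show this quantity is positive at $x=f_0+c$ for both pieces (the paper just says this is ``straightforward using \eqref{eq:defrho}'' whereas you compute the jump explicitly via $B_0$), and then exploit the downward concavity of the quadratic $(\cL-\alpha)h_{r1}$ to locate the unique sign change. The only organisational difference is that the paper lets the concave quadratic run over all of $\RR$ (negative at $-\infty$, positive at $f_0+c$) and then truncates by setting $y_v(c)=y_c\vee\Psi(x_v(c))$, while you restrict to $[x_c,f_0+c]$ from the outset and split on the sign at $x_c$; you also supply the asymptotics for \eqref{eq:Hneglim}--\eqref{eq:hr2lims}, which the paper's proof leaves implicit.
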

\begin{proof}
Fixing $c>0$, the first claim follows from the smoothness of the usual transformation. We have
\begin{eqnarray}
(\cL - \alpha) h_{r1}(x;c) &=&
\d - (\a \d - \la)x^2 +  c \a ( \d (2 x -c ) - 1), \label{hr1cc} \\
(\cL - \alpha) h_{r2}(x;c) &=& c(\la (2 x -c ) - \a), \label{hr2cc}
\end{eqnarray}
and at $x=f_0+c$ it is straightforward (using \eqref{eq:defrho}) that both \eqref{hr1cc} and \eqref{hr2cc} are strictly positive.

Since the expression \eqref{hr1cc} is concave, negative as $x \to -\infty$ 
and positive when $x=f_0+c$, it has a unique root $x_v(c) \in (-\infty,f_0+c)$.
Together with \eqref{hr2cc} 
this implies that $(\cL - \alpha) h_r$ changes its sign exactly once on $\R$, namely when $x=x_v(c)$. Setting $y_v(c)=y_c \vee \Psi(x_v(c))$, the required result then follows from Lemma \ref{lem:geomlemma}.
\end{proof}
\hfill$\Box$

\begin{Remark}\label{rem:heurbreak}
It is clear from \eqref{hr2cc} that if $f_0<\atl$ then for sufficently small $c>0$, the transformed obstacle $H_{r2}(\cdot;c)$ would be strictly concave on $[f_0+c, y_z]$ for some $y_z>f_0+c$. It is straightforward to show from the obstacle geometry that this would create a disconnected continuation region in problem \eqref{def-V} for sufficiently small $c$. 
The assumption $f_0 \geq \atl$ (cf. \eqref{eq:longineq}) is therefore necessary for our optimal stopping heuristic to apply directly when $f_0>\fotd$. Interestingly, this concave region would not cause a similar problem in the bottom panel of Figure \ref{fig:boundaries} since it lies {\em inside} the continuation region when $\la \in (\la_*,\la^*]$ and $c \in [0,\bar c]$ (see also Remark \ref{rem:noapply}).
\end{Remark}

We will see below (in Proposition \ref{existencegeometric}) that in contrast to Section \ref{sec:nofuel}, for each $c>0$ the minorant touches the obstacle twice, once on $H_l$ and once on $H_r$. For small $c$ the right-hand tangency point is with $H_{r1}$ (rather than with $H_{r2}$), a situation illustrated in Figure \ref{fig:Fig1synth}. It is this distinction which leads to new equations for the moving boundaries of the control problem, and below we will restrict attention to this new case in which tangency occurs with $H_{r1}$.

In order to construct the minorants $W(\cdot;c)$ to the transformed obstacles $H(\cdot;c)$ we now examine the geometry of $H_r(\cdot;c)$ as $c$ decreases to 0.

\begin{Definition}\label{def:tangents}
 For $y \neq y_c$ let $r_y(\,\cdot\, , c)$ be the straight line tangent to $H(\cdot;c)$ at $y$, and let $r_{y_c}(\,\cdot\, , c)$ be the straight line tangent to $H_r(\cdot;c)$ at $y_c$:
\begin{align}
r_y(z;c)&=\pd H y (y;c)(z-y)+H(y;c), \quad y \neq y_c,\nonumber
\\[+5pt]
&=\pd {H_r} y (y;c)(z-y)+H(y;c), \quad y = y_c.\nonumber
\end{align}
Let $I(y;c)=r_y(0;c)$ be its intercept at the vertical axis, and let $P_r(y;c)$ be the following signed distance between $r_y(\,\cdot\, , c)$ and $H_l$:
\begin{align}\
P_r(y;c)&=\sup_{z \in [1, y_c]}a(z,y;c),\:\: \text{ where }\:\: a(z,y;c):=r_y(z;c)-H_l(z).\label{def-P_r}
\end{align} 
\end{Definition}

\begin{Lemma}
\label{lem:minorant}
Let $1 < y_u<y_w \leq \Psi(f_0)$. Then for all $c>0$ sufficiently small, the tangent $r_{y_u}(\cdot;0)$ to $H_l(\cdot)$ at $y_u$ lies strictly below $H_r(\cdot;c)$ on $[y_w,\infty)$.
\end{Lemma}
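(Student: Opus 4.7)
My plan is to combine a quantitative strict-inequality estimate between the tangent line and $W_0$ on $[y_w,\infty)$ with the convergence $H_r(\cdot;c)\to W_0$ as $c\to 0$, splitting $[y_w,\infty)$ into a bounded piece (on which the convergence is uniform) and a tail (handled by asymptotic growth). First I would identify $r_{y_u}(\cdot;0)$: since $V_0\leq h_l$ we have $H(\cdot;0)=W_0$, and on $[1,\Psi(f_0)]$ this coincides with $H_l$ by \eqref{eq:Wzero}, so $r_{y_u}(z;0)=L(z):=H_l(y_u)+H_l'(y_u)(z-y_u)$. By Lemma~\ref{lem:Hlprops}, $H_l$ is strictly convex and strictly decreasing on $[1,\Psi(f_0))$, hence $a:=H_l'(y_u)<0$, $L(z)<H_l(z)$ for all $z\in[y_w,\Psi(f_0)]$ (using $y_u<y_w$), and $L(z)\leq L(\Psi(f_0))<B_0=W_0(z)$ for all $z\in[\Psi(f_0),\infty)$. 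By continuity and compactness on $[y_w,\Psi(f_0)]$, together with the monotone decrease of $L$ to $-\infty$ on the tail, this produces a gap $\epsilon:=\inf_{z\geq y_w}(W_0(z)-L(z))>0$.

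Next I would handle the tail. Using \eqref{def-G1} and \eqref{eq:qzero}, for $x:=\Psi^{-1}(z)\geq f_0+c$ a short calculation yields
\[
H_r(z;c)=\bigl[c+(\la/\a)c^2-2c(\la/\a)x\bigr]e^{x\sta}+B_0\,e^{c\sta},
\]
while $L(z)=a\,e^{2x\sta}+\mathrm{const}$. For $c\in(0,1]$ the bracketed factor is linear in $x$ with bounded coefficients, so $|H_r(z;c)-B_0 e^{c\sta}|=O(x\,e^{x\sta})$ uniformly in $c$; since $-a>0$ and $e^{2x\sta}/(x\,e^{x\sta})\to\infty$, there exists $X_0$ independent of $c$ such that $H_r(z;c)-L(z)>0$ whenever $x\geq X_0$ and $c\in(0,1]$. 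Setting $M:=\Psi(X_0)$ disposes of $[M,\infty)$ uniformly for small $c$.

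Finally, on the compact interval $[y_w,M]$, joint continuity of $h_r$ in $(x,c)$ (through $\tilde V_0$) gives uniform convergence $H_r(\cdot;c)\to W_0$ as $c\to 0$. I would choose $c_0>0$ small enough that $|H_r(z;c)-W_0(z)|<\epsilon/2$ on $[y_w,M]$ for all $c\in(0,c_0]$; then $H_r(z;c)>W_0(z)-\epsilon/2\geq L(z)+\epsilon/2>L(z)$, completing the bounded case and, combined with the tail step, the proof. The main technical hurdle will be the tail estimate: one cannot invoke uniform convergence on all of $[y_w,\infty)$ since $H_r(z;c)\to-\infty$ as $z\to\infty$ by \eqref{eq:Hneglim}, so an explicit uniform-in-$c$ asymptotic comparison against the exponentially-growing $-L\sim -a\,e^{2x\sta}$ is needed, and the argument uses crucially that the $x\,e^{x\sta}$ rate of decay of $H_r(z;c)$ is slower than the $e^{2x\sta}$ rate of $-L$.
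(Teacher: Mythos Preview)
Your proof is correct and follows essentially the same strategy as the paper's: establish that the tangent $r_{y_u}(\cdot;0)$ lies strictly below $W_0=H_r(\cdot;0)$ on $[y_w,\infty)$ with a gap that grows (linearly in $y$) because the gradient is strictly negative, and then control $H_r(\cdot;c)-W_0$ by a sublinear-in-$y$ bound so that the gap survives for small $c$. The paper compresses this into the single estimate $|H_r(y;c)-W_0(y)|=c\cdot O(\sqrt{y})$ (obtained from $h_r(x;c)-h_r(x;0)=c\cdot O(x)$ and the usual transformation) together with the negative tangent slope, while you make the compact/tail split explicit; these are the same argument at different levels of detail.
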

\begin{proof} 
We have from \eqref{def-G0} and \eqref{eq:protonofuel} that $h_r(x;0) = V_0(x)$ for $x \geq 0$. From \eqref{def-G1} we have
\begin{equation*}
h_{r}(x;c)-h_{r}(x;0) = h_{r}(x;c)-V_0(x) = c \cdot O(x) \quad \text{ as } x \to \infty, 
\end{equation*}
so applying the usual transformation and Proposition \ref{prop:DayKar} we have
\begin{equation}\label{eq:Hrconv}
|H_{r}(y;c)-W_0(y)|=c \cdot O(\sqrt y) \text{ as } y \to \infty.
\end{equation}
From Figure \ref{fig:1} the tangent $r_{y_u}(\cdot;0)$ 
lies strictly below $W_0(y) = H_r(y,0)$ for $y \geq y_w$ and has strictly negative gradient. The result follows from the estimate \eqref{eq:Hrconv}.
\hfill$\Box$
\end{proof}

\vspace{1mm}
We now present the main result used in this paper, which characterises the new moving boundaries $F$ and $G$ illustrated in the top panel of Figure \ref{fig:boundaries} and confirms that the principle of smooth fit holds across them.

\begin{Proposition}
\label{existencegeometric}
There exists $c_1>0$ such that for each fixed $c \in (0,c_1)$ the following three statements hold:

1. There exists a unique couple $(\hat{y}_1(c),\hat{y}_2(c))$ with $1 < \hat{y}_1(c) < y_c \leq y_v(c) < \hat{y}_2(c)$ solving the system
\begin{equation}
\label{eq:system-y1y2}
\left\{
\begin{array}{lr}
\pd {H_l}{y}(y_1)= \pd{H_r} y (y_2;c), \\[+4pt]
H_l(y_1) - \pd {H_l} y (y_1)y_1 = H_r (y_2;c) -\pd {H_r} y (y_2;c)y_2.
\end{array}
\right.
\end{equation}
We also have the bound 
\begin{equation}\label{eq:hyobound}
\hyo(c)<\Psi(f_0).
\end{equation}

2. The function $y \mapsto W(y;c)$ given by
\begin{equation}
W(y;c) := 
\left\{
\begin{array}{ll}
H_l(y), & 1 \leq y \leq \hyo(c),\\[+3pt]
A(c)y + B(c),
& \hyo(c) < y < \hyt(c), \\[+3pt]
 H_r(y), & y \geq \hyt(c),\\
\end{array}
\right. \label{eq:Wdef}
\end{equation}
where $A(c)=\pd {H_l}{y}(\hyo(c))=\pd {H_r}{y}(\hyt(c);c)$ and $B(c)=I(\hyo(c);c)=I(\hyt(c);c)$ (see Definition \ref{def:tangents}), is the greatest nonpositive convex minorant of $H(\cdot;c)$ on $[1,\infty)$ and lies in $C^1[1,\infty)$. The straight line 
\begin{equation}\label{eq:line}
y \mapsto A(c)y + B(c), \quad y>1,
\end{equation}
also lies below $H(\cdot;c)$.

3. Setting
\begin{eqnarray}
\begin{array}{ll}
F(c)&=\Psi^{-1}(\hyo(c)),\\ G(c)&= \Psi^{-1}(\hyt(c)),
\end{array} 
\label{eq:FGdef}
\end{eqnarray}
we have
\begin{equation}
\tilde V(x;c) := 
\left\{
\begin{array}{ll}
\d x^2, & 0 \leq x \leq F(c),\\[+3pt]
\fla x^2 + \flas + A(c) e^{x\sta} + B(c) e^{-x\sta}, & F(c) < x < G(c), \\[+3pt]
 \tilde V_0(x-c,0) + c, & x \geq G(c),\\
\end{array}
\right. \label{eq:Qdef}
\end{equation}
with $\tilde V_0$ defined as in \eqref{eq:qzero}. The principle of smooth fit holds across the free boundary points $F(c)$ and $G(c)$.
\end{Proposition}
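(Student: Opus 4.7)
The three parts build on each other: Part 1 constructs the common tangent supporting the minorant, Part 2 assembles the minorant from this tangent using the convexity/concavity information of Lemmas \ref{lem:Hlprops} and \ref{lem:Hconvexity}, and Part 3 translates back to the natural scale via Proposition \ref{prop:DayKar} together with the relation \eqref{def-V}. I expect Part 1, specifically the existence/uniqueness of $(\hyo(c),\hyt(c))$ and the placement bounds $1<\hyo(c)<y_c\le y_v(c)<\hyt(c)<\infty$ with $\hyo(c)<\Psi(f_0)$, to be the main obstacle, since it requires combining several independent monotonicities with the small-$c$ asymptotics from Lemma \ref{lem:minorant}.

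\textbf{Part 1 (common tangent).} The plan is a shooting argument. For each candidate contact point $y_1\in(1,\Psi(f_0))$, the tangent line $r_{y_1}(\,\cdot\,;0)$ to $H_l$ is well defined and, by Lemma \ref{lem:Hlprops}(2), lies strictly below $H_l$ on $[1,\Psi(f_0)]\setminus\{y_1\}$ with strictly negative slope (part \ref{lempart:413}). By Lemma \ref{lem:Hconvexity}, $H_r(\,\cdot\,;c)$ is strictly convex on $(y_v(c),\infty)$ with $\partial_y H_r\to 0$ and $H_r\to-\infty$; consequently $\partial_y H_r(\,\cdot\,;c)$ increases strictly from a strictly negative value at $y_v(c)$ to $0$ at $\infty$, so there exists a unique $y_2(y_1,c)>y_v(c)$ with $\partial_y H_r(y_2;c)=H_l'(y_1)$, provided $|H_l'(y_1)|$ is sufficiently small, which holds near $y_1=\Psi(f_0)$ since $H_l'(\Psi(f_0))=0$ by Lemma \ref{lem:Hlprops}(3). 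Define the intercept gap
\begin{equation*}
\Delta(y_1;c):=\bigl[H_l(y_1)-H_l'(y_1)y_1\bigr]-\bigl[H_r(y_2(y_1,c);c)-H_l'(y_1)y_2(y_1,c)\bigr].
\end{equation*}
I would show existence of a zero of $\Delta(\,\cdot\,;c)$ by the intermediate value theorem: at the upper end $y_1\uparrow\Psi(f_0)$, the construction forces $y_2(y_1,c)\to\Psi(f_0+c)$ so $\Delta$ converges to the value in the no-fuel picture, which by Lemma \ref{lem:minorant} and \eqref{eq:Hrconv} has one sign for small $c$; at a well-chosen lower value of $y_1$, Lemma \ref{lem:minorant} yields the opposite sign. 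Uniqueness and the bound $\hyo(c)<\Psi(f_0)$ follow from strict monotonicity of $\Delta$ in $y_1$, which is where the strict convexity of $H_l$ on $[1,\Psi(f_0)]$ and of $H_r(\,\cdot\,;c)$ on $(y_v(c),\infty)$ is essential. The bound $\hyo(c)>y_c$ is excluded since on $[y_c,\Psi(f_0)]$ the obstacle $H$ coincides with $H_r$, so a tangent to $H_l$ there would be tangent to the wrong branch.

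\textbf{Part 2 (minorant).} Given $(\hyo(c),\hyt(c))$, define $W(\,\cdot\,;c)$ by \eqref{eq:Wdef}. The $C^1$ matching at $\hyo(c)$ and $\hyt(c)$ is by construction of the common tangent. Convexity of each of the three pieces is immediate from Lemmas \ref{lem:Hlprops}(2) and \ref{lem:Hconvexity}, and equality of the one-sided slopes at the junction points yields convexity of $W$ on $[1,\infty)$. For $W\le H=H_l\wedge H_r$ I would argue piecewise: on $[1,\hyo(c)]$ and $[\hyt(c),\infty)$ there is equality; on $[\hyo(c),y_c]$ the linear piece lies below $H_l=H$ by convexity of $H_l$ on $[1,\Psi(f_0)]$, using $\hyo(c)<y_c<\Psi(f_0)$; on $[y_v(c),\hyt(c)]$ the linear piece lies below $H_r=H$ by convexity of $H_r$ on $(y_v(c),\infty)$; on the remaining interval $[y_c,y_v(c)]$ (where $H=H_r$ is concave by Lemma \ref{lem:Hconvexity}), I use that a linear function which is below a concave function at both endpoints of an interval stays below throughout, the endpoint inequalities being supplied by the two previous steps and by continuity of $H$ at $y_c$. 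Nonpositivity of $W$ follows because $H_l(\hyo(c))<0$ (from Lemma \ref{lem:Hlprops}(1)(3) since $\hyo(c)<\Psi(f_0)$) and $H_r\to-\infty$. Maximality among nonpositive convex minorants is standard: any such $\tilde W$ satisfies $\tilde W\le H=W$ off $[\hyo(c),\hyt(c)]$, and on this interval convexity together with $\tilde W(\hyo(c))\le W(\hyo(c))$ and $\tilde W(\hyt(c))\le W(\hyt(c))$ forces $\tilde W\le W$ since $W$ is affine there.

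\textbf{Part 3 (value function and smooth fit).} By Proposition \ref{prop:DayKar} we have $V(x;c)=\phi_\alpha(x)W(\Psi(x);c)$, and \eqref{def-V} gives $\tilde V(x;c)=V(x;c)+\flas+\fla x^2$. Substituting \eqref{eq:Wdef} piecewise: on $[0,F(c)]$ the obstacle definition \eqref{eq:hldef} cancels the additive correction to give $\delta x^2$; on $(F(c),G(c))$ the linear minorant $A(c)y+B(c)$ transforms under $y=e^{2x\sqrt{2\alpha}}$ and multiplication by $\phi_\alpha(x)=e^{-x\sqrt{2\alpha}}$ into $A(c)e^{x\sqrt{2\alpha}}+B(c)e^{-x\sqrt{2\alpha}}$, yielding the middle line of \eqref{eq:Qdef}; on $[G(c),\infty)$ we recover $\tilde V_0(x-c)+c$ from \eqref{def-G0} and \eqref{eq:protonofuel}. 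Finally, smooth fit across $F(c)$ and $G(c)$ is inherited from the $C^1$ property of $W(\,\cdot\,;c)$ established in Part 2, since both $\Psi$ and $\phi_\alpha$ are smooth and the additive correction is $C^\infty$ in $x$.
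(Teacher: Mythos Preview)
Your overall architecture is right, and Parts 2 and 3 are essentially sound. The genuine gap is in Part 1, at the heart of your shooting argument.

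The endpoint claim ``as $y_1\uparrow\Psi(f_0)$, the construction forces $y_2(y_1,c)\to\Psi(f_0+c)$'' is incorrect. By Lemma \ref{lem:Hlprops}(\ref{lempart:413}) we have $H_l'(y_1)\to 0^-$ as $y_1\uparrow\Psi(f_0)$; by Lemma \ref{lem:Hconvexity}, $\partial_y H_r(\cdot;c)$ is strictly increasing on $(y_v(c),\infty)$ with limit $0$ at infinity, so your slope-matched $y_2(y_1,c)$ tends to $+\infty$, not to $\hye(c)$. Since the intercept $I(y_2;c)\to-\infty$ by \eqref{eq:Ilim}, one actually gets $\Delta(y_1;c)\to+\infty$ at this end. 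At the other end your $\Delta$ is only defined down to the $y_1^*$ at which $H_l'(y_1^*)=\partial_y H_r(y_v(c);c)$, and the sign of $\Delta(y_1^*;c)$ is not transparent from Lemma \ref{lem:minorant}. So the intermediate-value step, as written, does not go through; nor does the monotonicity of $\Delta$ follow without first knowing $y_2(y_1,c)>y_1$ on the whole domain.

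The paper avoids all of this by shooting in the opposite direction. It varies the \emph{right} contact point $y\ge y_c$, takes the tangent $r_y(\cdot;c)$ to $H_r$, and measures the signed gap $P_r(y;c)=\sup_{z\in[1,y_c]}\bigl(r_y(z;c)-H_l(z)\bigr)$. Then $P_r(y_c;c)>0$ directly from \eqref{eq:hrhl}, $P_r$ is increasing on $(y_c,y_v(c))$ by concavity and decreasing on $(y_v(c),\infty)$ by convexity, and $P_r\to-\infty$ via \eqref{eq:Ilim}; this gives a unique zero $\hyt(c)\in(y_v(c),\infty)$ with no domain restriction and no delicate endpoint asymptotics. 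The left contact $\hyo(c)$ is then simply the argmax of $z\mapsto r_{\hyt(c)}(z;c)-H_l(z)$ over $[1,y_c]$, which immediately yields $\hyo(c)<y_c$ and reduces the remaining bound $\hyo(c)>1$ (hence the existence of $c_1$) to a clean contradiction via Lemma \ref{lem:minorant}. By contrast, your argument for $\hyo(c)<y_c$ (``tangent to the wrong branch'') is circular: system \eqref{eq:system-y1y2} is posed for $H_l$ and $H_r$ separately, not for $H$, so nothing in your construction excludes $\hyo(c)\ge y_c$ a priori, and Part 2 then uses $\hyo(c)<y_c$ without it having been established.
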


\begin{Remark} Figure \ref{fig:Fig1synth} contains sufficient information to suggest the proof of parts 1 and 2. Also we will refer to the line \eqref{eq:line}, which may also be denoted $r_{\hyo(c)}(\cdot;c)$ or $r_{\hyt(c)}(\cdot;c)$, as the {\em common tangent}.
\end{Remark}

\begin{proof} 
1. It is clear from Figure \ref{fig:Fig1synth} (or alternatively by using  \eqref{eq:hrhl}) that 
the function $y \mapsto P_r(y;c)$ of Definition \ref{def:tangents} is strictly positive at $y=y_c$. By the concavity of $H_r$ on $(y_c,y_v(c))$, $P_r(\cdot;c)$ is increasing on $(y_c,y_v(c))$ and by convexity it is decreasing on $(y_v(c),\infty)$. 
Evaluating the transformed obstacle $H_{r2}(y;c)$ using \eqref{def-G1} we obtain 
\begin{equation}
H_{r2}(y;c)= \sqrt y \left( \fla c\left( c-\frac {\ln y}{\sta} \right) +  c+ B_0 e^{c \sta}y^{-1/2}\right),
\end{equation}
so that the intercept $I(y;c)$ satisfies
\begin{equation}\label{eq:Ilim}
I(y;c)=-\frac{\sqrt y \la c}{(2\a)^{3/2}}\ln y+ O(\sqrt y) \to -\infty \text{ as } y \to \infty.
\end{equation} 
From Figure \ref{fig:Fig1synth} (see also Lemma \ref{lem:Hconvexity}) it is now clear that 
$\lim_{y \to \infty}P_r(y;c)=-\infty$. We conclude from the continuity of the function $y \mapsto P_r(y;c)$ that there exists a unique point $\hat{y}_2(c) \in (y_v(c),\infty)$ satisfying $P_r(\hat{y}_2(c);c)=0$. By strict convexity we also have  
\begin{equation} \label{eq:Hcvlim}
H(y;c)-r_{\hyt(c)}(y;c) > 0 \quad \text{ for all } y>\hyt(c).
\end{equation}
By the convexity of $H_l$, the supremum in \eqref{def-P_r} must be uniquely achieved at some point $z=:\hat{y}_1(c) \in [1,y_c)$. The couple $(\hyo(c),\hyt(c))$ then satisfies equations \eqref{eq:system-y1y2} if $\hyo(c)$ lies in the interior of this interval, ie. if $\hyo(c) > 1$. We therefore seek $c_1>0$ such that
\begin{equation}\label{eq:hyoneq1}
\hyo(c) > 1 \text{ for all } c \in (0,c_1).
\end{equation} 
Fix $c>0$ and suppose, hoping for a contradiction, that $\hyo(c)=1$. By construction the common tangent then lies {\em strictly} below 
$H_l(z)$ for $z \in (1,y_c]$, 
so in particular it must lie below (or be equal to) the tangent $r_{1}(y;c)$ 
for all $y \geq 1$. 
However from 
Lemma \ref{lem:minorant}, for sufficiently small $c$ 
we have $r_1(y;c)< H_r(y;c)$ for all $y \in [\Psi(\fotd),\infty)$. Therefore the common tangent lies strictly below $H_r(\cdot;c)$ at the supposed tangency point $y=\hyt(c)>\Psi(\fotd)$, a
contradiction.
We conclude that %
$\hyo(c)>1$ for sufficiently small $c$, and so 
there exists $c_1>0$ such that \eqref{eq:hyoneq1} 
holds. 

Recall also that $H_l(y)$ is increasing on $[\Psi(f_0),\infty)$ from Lemma \ref{lem:Hlprops}. If the tangency point $\hyo(c)$ lay in this interval then as $y \to \infty$ the common tangent would increase to infinity while $H$ decreased without bound (from \eqref{eq:Hneglim}), 
contradicting \eqref{eq:Hcvlim}, and so we conclude that
 $\hyo(c)<\Psi(f_0)$.
 
2. Now fix $c \in (0,c_1)$ and construct $\hyo(c)$ and $\hyt(c)$ as above. It is clear from Figure \ref{fig:Fig1synth} that in order to establish that the function $W(\cdot;c)$ of \eqref{eq:Wdef} is the greatest nonpositive convex minorant of $H(\cdot;c)$ on $[1,\infty)$ it remains only to check that the tangent $r_{\hyo(c)}(\cdot;c)$ lies below $H(y;c)$ for $y \in (\hyo(c),\hyt(c))$. Suppose for a contradiction that the common tangent coincides with $H(\cdot;c)$
when $y=y_a \in (\hyo(c),\hyt(c))$. It is obvious from the geometry of $H_l$ that 
then $y_a>y_c$; also $H_r$ must be concave at 
$y_a$. This forces $H(\cdot;c)$ to lie strictly below the common tangent when $y=y_c$, a contradiction.
Finally the `double tangency' equations \eqref{eq:system-y1y2} confirm that $W(\cdot;c)$ lies in $C^1(1,\infty)$. 

3. This part now follows from Proposition \ref{prop:DayKar}.
\end{proof} \hfill $\Box$


\section{Free boundary analysis} \label{sec:sol1fam}

In this section we establish the existence of $c_0>0$ such that the moving boundaries $F(c)$ and $G(c)$ may be extended to continuous monotone functions on $[0,c_0)$, which are differentiable on $(0,c_0)$ with $G'(c)>1$ (recall Figure \ref{fig:boundaries}), but $F(0) \neq f_0$. 

\begin{Proposition}
\label{cor:1/2d0}
The functions $\hyo$, $\hyt$ defined in Proposition \ref{existencegeometric} satisfy $\lim_{c \to 0}\hyo(c)=\lim_{c \to 0}\hyt(c)=\Psi(\frac 1 {2\d})$.
\end{Proposition}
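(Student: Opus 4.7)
The plan is to combine the sandwich bounds on $\hyo(c)$ and $\hyt(c)$ from Proposition \ref{existencegeometric} with the strict convexity of $H_l$ on $[1,\Psi(f_0)]$ recorded in Lemma \ref{lem:Hlprops}. First, $\hyo(c)<y_c=\Psi(\fotd+c/2)\downarrow\Psi(\fotd)$ gives $\limsup_{c\to 0}\hyo(c)\le\Psi(\fotd)$, and $\hyt(c)>y_v(c)\ge y_c$ gives $\liminf_{c\to 0}\hyt(c)\ge\Psi(\fotd)$. Moreover the standing restriction that tangency occurs with $H_{r1}$ forces $\hyt(c)\le\hye(c)=\Psi(f_0+c)$, so $\limsup_{c\to 0}\hyt(c)\le\Psi(f_0)$ and the family $\{\hyt(c)\}$ is bounded. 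This already places every subsequential limit inside the strictly convex region of $H_l$, which is the crucial feature.

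I would then argue along subsequences. Take any $c_n\downarrow 0$ and, after passing to a sub-subsequence, let $\hyo(c_n)\to y_1^*\in[1,\Psi(\fotd)]$ and $\hyt(c_n)\to y_2^*\in[\Psi(\fotd),\Psi(f_0)]$. By part~2 of Proposition \ref{existencegeometric} the common tangent is $\ell_n(y)=A(c_n)y+B(c_n)$ with $A(c_n)=H_l'(\hyo(c_n))$ and $B(c_n)=H_l(\hyo(c_n))-A(c_n)\hyo(c_n)$; continuity then gives $\ell_n\to\ell^*$ locally uniformly, where $\ell^*$ is the tangent to $H_l$ at $y_1^*$. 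From \eqref{def-G0}, \eqref{def-G1} and \eqref{eq:protonofuel} one checks that $h_r(\cdot;c)\to V_0(\cdot)$ locally uniformly as $c\downarrow 0$, and the usual transformation then gives $H_r(\cdot;c_n)\to W_0(\cdot)$ locally uniformly. Passing to the limit in the identity $\ell_n(\hyt(c_n))=H_r(\hyt(c_n);c_n)$, which is legal because $y_2^*$ is finite, yields $\ell^*(y_2^*)=W_0(y_2^*)$.

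Now $y_2^*\in[\Psi(\fotd),\Psi(f_0)]$, so Corollary \ref{cor:czerosol} gives $W_0(y_2^*)=H_l(y_2^*)$; hence $\ell^*$ is tangent to $H_l$ at $y_1^*$ and meets $H_l$ a second time at $y_2^*$. By strict convexity of $H_l$ on $[1,\Psi(f_0)]$ (Lemma \ref{lem:Hlprops}), a tangent line at any point lies strictly below the graph elsewhere in the interval, which forces $y_1^*=y_2^*$; combined with $y_1^*\le\Psi(\fotd)\le y_2^*$ this yields $y_1^*=y_2^*=\Psi(\fotd)$, proving the proposition. The main obstacle is keeping both $y_1^*$ and $y_2^*$ inside the strictly convex region $[1,\Psi(f_0)]$: this is why the upper bound $\hyt(c)\le\Psi(f_0+c)$ coming from the restriction to $H_{r1}$-tangency is essential, and it also explains why only locally uniform (rather than global) convergence of $H_r(\cdot;c)$ to $W_0(\cdot)$ is needed to pass to the limit in the tangency equation.
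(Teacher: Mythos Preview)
Your subsequence-plus-strict-convexity argument is sound in spirit, but there is a circularity at the step where you bound $\hyt(c)$ from above. You invoke ``the standing restriction that tangency occurs with $H_{r1}$'' to obtain $\hyt(c)\le\hye(c)=\Psi(f_0+c)$; however in the paper this restriction is introduced only \emph{after} Proposition~\ref{cor:1/2d0}, and its validity for small $c$ (that is, $c_2>0$ in \eqref{eq:c2def}) is established precisely by appealing to Proposition~\ref{cor:1/2d0} via Lemma~\ref{rem:Linterp}. So the upper bound on $\hyt(c)$ that you use presupposes the very limit you are proving. The gap can be closed without the restriction: boundedness of $\hyt(c)$ as $c\downarrow 0$ follows from the slope-matching equation in \eqref{eq:system-y1y2}, since $H_l'(\hyo(c))$ is bounded away from zero (because $\hyo(c)\in(1,y_c)$ lies in a compact subset of $(1,\Psi(f_0))$ on which $H_l$ is strictly decreasing, by Lemma~\ref{lem:Hlprops}) while $\pd{H_r}{y}(y;c)\to 0$ as $y\to\infty$ uniformly for small $c$, as one checks from the explicit form of $h_{r2}$ in \eqref{def-G1}.

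For comparison, the paper's own proof avoids the boundedness question entirely by arguing by contradiction directly from Lemma~\ref{lem:minorant}: if $\liminf_{c\to 0}\hyo(c)=y^*<\Psi(\fotd)$ one applies the lemma with $y_u=y^*$, $y_w=\Psi(\fotd)$; if $\limsup_{c\to 0}\hyt(c)=y^\circ>\Psi(\fotd)$ one takes $y_u=\Psi(\fotd)$ and $y_w\in(\Psi(\fotd),\min\{y^\circ,\Psi(f_0)\})$. In each case the lemma places the relevant tangent strictly below $H_r(\cdot;c)$ on $[y_w,\infty)$ for small $c$, which is incompatible with tangency at $\hyt(c)$. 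Once your boundedness step is secured independently, your convexity argument is a legitimate and rather transparent alternative; as written, however, it leans on a fact the paper derives only as a consequence of the proposition itself.
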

\begin{proof} We employ Lemma \ref{lem:minorant} as in the proof of part 1 of Proposition \ref{existencegeometric}. We have
\[
\limsup_{c \to 0} \hyo(c) \leq \Psi \lb \frac 1 {2\d} \rb \leq \liminf_{c \to 0} \hyt(c).
\] 
Suppose that $\liminf_{c \to 0} \hyo(c) = y^* < \Psi \lb \frac 1 {2\d} \rb$. Taking $y_u=y^*$ and $y_w=\Psi(\fotd)$ in Lemma \ref{lem:minorant} again gives a contradiction for sufficiently small $c$. 

Similarly suppose that $\limsup_{c \to 0} \hyt(c) = y^\circ > \Psi \lb \frac 1 {2\d} \rb$. Then taking $y_u=\Psi(\fotd)$ and $y_w$ with $\Psi(\fotd) < y_w < \min\{y^\circ,\Psi(f_0)\}$ gives the required contradiction for sufficiently small $c$. 
 \hfill $\Box$
\end{proof}

\vspace{1mm}
We now analyse the moving boundaries $F$ and $G$. Where possible we will reuse identities and notation from \cite{KOWZ00} (in particular the notation $h_1$ to $h_4$, $\cH_3$, $\cH_4$, $q$, $L$ and $U$) so that the analytical contribution of the present paper is clear.

\subsection{Smooth fit equations}

It is straightforward to check from \eqref{eq:Hl} that the straight line tangent to $H_l(\cdot)$ at $y=\Psi(x)$ has gradient
\begin{eqnarray}
\label{eq:hidefs1}
h_1(x) &:=& \frac{\a \d -\la}{2\a}e^{-x\sta}\rho(x), 
\end{eqnarray}
with $\rho$ defined as in \eqref{eq:defrho}. 

\begin{Remark}
\label{rem:H3c} 
It follows that $\pd {h_1}{x}(x)$ 
 is strictly positive when $y=\Psi(x)$ lies in a strictly convex region of $H_l(\cdot)$. From Lemma \ref{lem:Hlprops} we may therefore define $h_1^{-1}:[-\frac{\la}{2\a^2},0] \to [0,f_0]$. Recalling the definition of $f_0$ (see \eqref{eq:defrho}), $h_1^{-1}$ is differentiable on $(-\frac{\la}{2\a^2},0)$. 
\end{Remark}
This tangent line intercepts the vertical axis of Figure \ref{fig:Fig1synth} at the value 
\begin{eqnarray}\label{eq:hidefs2}
h_2(x) &:=& \frac{\a \d -\la}{2\a}e^{x\sta}\left(\rho(x) - \frac{4x}{\sta} \right).
\end{eqnarray}
The usual transformation gives $H_{r1}(y;c)=e^{x\sta} h_{r1}(x;c)$, where $y=\Psi(x)$.
It follows that the line tangent to $H_{r1}(\cdot;c)$ at $y=\Psi(x)$ has gradient $\tilde \cH_3(x,c)$, where
\begin{equation}
\tilde \cH_3(x,c):=\pd {H_{r1}}y (y;c)=\frac{e^{-x\sta}}{2\sta}\lb \pd {h_{r1}}x (x;c) + \sta {h_{r1}}(x;c)\rb, \label{eq:th3}
\end{equation}
 while its intercept at the vertical axis is
\begin{equation}\label{eq:th4}
 \tilde \cH_4(x,c):=\lb H_{r1}- y \pd {H_{r1}}y \rb (y;c)=\frac{e^{x\sta}}{2\sta}\lb-\pd {h_{r1}}x (x;c) + \sta {h_{r1}}(x;c)\rb.
\end{equation}
Writing 
\begin{eqnarray}
h_3(x)&=&\fla \lb \atl - \lb x+\frac 1 \sta \rb \rb e^{-x\sta}, \nonumber\\
h_4(x)&=&\fla \lb x - \lb \atl +\frac 1 \sta \rb \rb e^{x\sta},
\end{eqnarray}
differentiation then yields the following identities which will be useful below:
\begin{eqnarray}
\lb \pd {\tilde \cH_3} x + \pd {\tilde \cH_3} c \rb (x,c)&=& h_3(x) - \sta \tilde \cH_3 (x,c),\label{eq:hall1}\\
\lb \pd {\tilde \cH_4} x + \pd {\tilde \cH_4} c \rb (x,c)&=&  \sta \tilde \cH_4 (x,c)-h_4(x).
\label{eq:hall2}\end{eqnarray}

We will henceforth restrict attention to the situation illustrated in Figure \ref{fig:Fig1synth}, in which $G(c) <  f_0+c$. In particular this means that the common tangent has gradient $\tilde \cH_3(G(c),c)$. 
This is not a technical restriction necessary for our heuristic to apply, but is imposed so that the moving boundary equations of this paper differ from those identified in the previously solved cases. 
\begin{Remark}\label{rem:noapply}
The repelling boundary $\{G(c):c \in (0,\bar c]\}$ illustrated in the bottom panel of Figure \ref{fig:boundaries} was found for the parameter range $\la \in (\la_*,\la^*]$ using the ansatz that $G(c)>\atl + c$ and, since $f_0 \leq \fotd < \atl$ when $\la \leq \la^*$, it then followed that $G(c)>f_0 + c$. In the latter case the common tangent had gradient $\cH_3(G(c),c)$ (defined in equation (10.9) of \cite{KOWZ00}), which differs from $\tilde \cH_3(G(c),c)$ defined above.
\end{Remark}

Define

\begin{equation}\label{eq:Ldot}
L(x,c):=
\tilde \cH_4(x,c)
-h_2\circ h_1^{-1}\left(\tilde \cH_3(x,c)\right), \end{equation} 
where $x$, $c$ 
must be chosen from a suitable domain. 

\begin{Lemma}\label{rem:Linterp}
$L(f_0+c,c)$ is well defined for all $c \in (0,c_1)$ and the following are equivalent:
\begin{enumerate}
\item[a)] $L(f_0+c,c)<0$,
\item[b)]  the line $r_{\hat y_3(c)}(\cdot;c)$ (see Figure \ref{fig:Fig1synth}) 
lies strictly below $H_l(\cdot)$,
\item[c)] $\hyt(c)<\hat y_3(c)$.
\end{enumerate}
\end{Lemma}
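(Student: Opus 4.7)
My plan is to exploit the geometric meaning of $L$: it measures the signed vertical distance between two parallel lines, namely the tangent $r_{\hat y_3(c)}(\cdot;c)$ to $H_r(\cdot;c)$ at $\hat y_3(c)$, of slope $\tilde \cH_3(f_0+c,c)$ and intercept $\tilde \cH_4(f_0+c,c)$, and the tangent to $H_l(\cdot)$ at $\Psi(x^*)$, of the same slope and intercept $h_2(x^*)$, where $x^* := h_1^{-1}(\tilde \cH_3(f_0+c,c))$. To verify well-definedness I would check $\tilde \cH_3(f_0+c,c) \in (-\la/(2\a^2),0)$, the domain of $h_1^{-1}$ (Remark \ref{rem:H3c}). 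Since $h_{r1}(\cdot;0)=h_l$ by \eqref{def-G1}, $\tilde \cH_3(f_0,0) = h_1(f_0) = 0$ via \eqref{eq:derivs} and $\rho(f_0)=0$; the directional derivative along $(f_0+t,t)$ at $t=0$ is $h_3(f_0)$ by \eqref{eq:hall1}, which is strictly negative because $f_0 > \atl$ by \eqref{eq:longineq}. So, shrinking $c_1$ if necessary, the required inclusion holds throughout $(0,c_1)$.

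For the equivalence (a) $\iff$ (b) I would argue as follows. By strict convexity of $H_l$ on $[1,\Psi(f_0)]$ (Lemma \ref{lem:Hlprops}) the parallel tangent to $H_l$ lies weakly below $H_l$ on this interval with equality only at $\Psi(x^*)$, while on $(\Psi(f_0),\infty)$ it has non-positive slope and therefore lies strictly below the strictly increasing $H_l$. Thus this parallel tangent is $\leq H_l$ on $[1,\infty)$ with equality only at $\Psi(x^*)$, and its parallel shift $r_{\hat y_3(c)}(\cdot;c)$ lies strictly below $H_l$ on $[1,\infty)$ precisely when the shift $L(f_0+c,c)$ is strictly negative.

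To establish (a) $\iff$ (c) I would track the tangent line at $y$ in the strictly convex region $(y_v(c),\infty)$ of $H_r(\cdot;c)$, for $y$ such that $H_r'(y;c) \in [-\la/(2\a^2),0]$, via the function
\[
\bar L(y;c) := \bigl(H_r(y;c)-yH_r'(y;c)\bigr) - h_2\bigl(h_1^{-1}(H_r'(y;c))\bigr),
\]
so that $\bar L(\hyt(c);c)=0$ by the common tangent property of Proposition \ref{existencegeometric}, while $\bar L(\hat y_3(c);c) = L(f_0+c,c)$. Using the identity $d/dy\,(H_r-yH_r')=-yH_r''$ and the analogous one for $H_l$ gives, via the chain rule,
\[
\frac{d\bar L}{dy}(y;c) = H_r''(y;c)\bigl(z(y;c)-y\bigr),
\]
where $z(y;c):=\Psi(h_1^{-1}(H_r'(y;c))) \in [1,\Psi(f_0)]$ is the tangency point on $H_l$ with matching slope. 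At $y=\hyt(c)$ the tangency is $z=\hyo(c)<\hyt(c)$; at $y=\hat y_3(c)$ it is $z=\Psi(x^*)\leq \Psi(f_0)<\hat y_3(c)$. Both endpoints therefore yield $d\bar L/dy<0$, and if this sign persists on $[\hyt(c),\hat y_3(c)]$ then $\bar L$ is strictly decreasing on this interval, yielding the equivalence $\hat y_3(c)>\hyt(c) \iff \bar L(\hat y_3(c);c)<0 \iff L(f_0+c,c)<0$.

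The main obstacle is verifying $z(y;c)<y$ uniformly on $[\hyt(c),\hat y_3(c)]$ rather than merely at the endpoints. For $c$ small this is direct since $y_v(c)=y_c$ and $\Psi(x^*) \in (y_c,\Psi(f_0)) \subset (y_v(c),\hat y_3(c))$; more robustly, one can argue directly, via strict convexity of $H_r(\cdot;c)$ on its convex region together with the strict inequality $H_r(\cdot;c)<H_l(\cdot)$ on $(y_c,\infty)$, that $r_{\hat y_3(c)}(\Psi(x^*);c)<H_r(\Psi(x^*);c)<H_l(\Psi(x^*))$ and hence $L(f_0+c,c)<0$ whenever $\Psi(x^*)$ lies in the strictly convex region of $H_r$.
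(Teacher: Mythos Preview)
Your argument for (a) $\Leftrightarrow$ (b) is correct and coincides with the paper's reading of $L$ as the signed intercept gap between the tangent $r_{\hat y_3(c)}$ to $H_r$ and the parallel tangent to $H_l$.

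There are two gaps. First, your well-definedness argument expands $\tilde\cH_3(f_0+c,c)$ to first order about $c=0$, yielding the required range only for small $c$ and forcing you to shrink $c_1$; this proves less than the lemma states. The paper instead observes that $\tilde\cH_3(f_0+c,c)=\cH_3(f_0+c,c)$ by $C^1$-matching of $H_{r1}$ and $H_{r2}$ at $\hat y_3(c)$, and then cites the estimate $\cH_3(x,c)\in(-\la/(2\a^2),0)$ for $x>\atl+c/2$ from \cite{KOWZ00}, which applies at $x=f_0+c$ for every $c>0$ since $f_0>\atl$ by \eqref{eq:longineq}.

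Second, and more substantively, your route to (a) $\Leftrightarrow$ (c) via monotonicity of $\bar L$ requires $z(y;c)<y$ on the whole interval between $\hyt(c)$ and $\hat y_3(c)$, and you have verified this only at the endpoints. Your fallback ``robust'' argument shows at most one implication, and only under the unverified side condition that $\Psi(x^*)$ lies in the strictly convex region of $H_r$ above $y_c$. The gap \emph{can} be closed---subtracting \eqref{hlcc} from \eqref{hr1cc} shows $H_{r1}''>H_l''$ on $(y_c,\hat y_3(c))$, so $H_r'-H_l'$ is increasing there and its negativity at $y=\Psi(f_0)$ propagates backward, giving $z<y$---but you did not supply this step. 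The paper sidesteps the issue entirely: it works with the signed distance $P_r$ of Definition \ref{def:tangents}, whose strict decrease on $(y_v(c),\infty)$ was already obtained in the proof of Proposition \ref{existencegeometric} by the elementary observation that for $y_1<y_2$ in a strictly convex region the tangent at $y_2$ lies strictly below the tangent at $y_1$ on $[1,y_c]\subset[1,y_1)$. Since $P_r(\hyt(c);c)=0$ and both $\hyt(c),\hat y_3(c)$ lie in $(y_v(c),\infty)$, the equivalence (b) $\Leftrightarrow$ (c) follows directly, with no control on $z(y;c)$ needed.
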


\begin{proof} Since $\hat y_3(c)$ is the boundary between $H_{r1}(\cdot;c)$ and $H_{r2}(\cdot;c)$ (see Figure \ref{fig:Fig1synth}), and since $H_r(\cdot;c)$ is continuously differentiable (Lemma \ref{lem:Hconvexity}), we have $\tilde \cH_3(f_0+c,c) = \cH_3(f_0+c,c)$.
We recall from page 46 of \cite{KOWZ00} that $$\cH_3(x,c) \in \left(-\frac{\la}{2\a^2},0\right) \text{ for all } x > \atl + \frac c 2,$$ and the latter inequality holds when $x=f_0+c$ by assumption \eqref{eq:longineq}. The first claim follows. Figure \ref{fig:Fig1synth} easily establishes the equivalence between a) and b), as follows. Recalling the definitions \eqref{eq:hidefs1}-\eqref{eq:hidefs2}, 
the second term on the right hand side of \eqref{eq:Ldot} (putting $x=f_0+c$) is the vertical intercept of the unique line tangent to $H_l(\cdot)$ whose gradient is $\cH_3(f_0+c,c)$, which is precisely 
the gradient of $r_{\hat y_3(c)}(\cdot;c)$. 
Recalling from Lemma \ref{lem:Hconvexity} that $H_{r2}(\cdot;c)$ is strictly convex on $[\hat y_3(c),\infty)$, b) is clearly equivalent to c).
\end{proof}
\hfill $\Box$

\begin{Corollary}
The quantity 
\begin{equation}\label{eq:c2def}
c_2 := c_1 \wedge \inf\{c \in (0,c_1): L(f_0+c,c) \geq 0\}
\end{equation}
is strictly positive.
\end{Corollary}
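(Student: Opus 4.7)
The plan is to show that $L(f_0+c,c)<0$ holds for all sufficiently small $c>0$, which immediately forces the infimum in \eqref{eq:c2def} to be bounded away from $0$ and hence $c_2>0$.

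By Lemma \ref{rem:Linterp}, the condition $L(f_0+c,c)<0$ is equivalent to the strict inequality $\hyt(c)<\hat y_3(c)$. So it suffices to verify this inequality for $c$ near $0$. First I would recall that, by definition, $\hat y_3(c)=\Psi(f_0+c)$, so $\lim_{c\to 0}\hat y_3(c)=\Psi(f_0)$ by continuity of $\Psi$. Next I would invoke Proposition \ref{cor:1/2d0}, which gives $\lim_{c\to 0}\hyt(c)=\Psi(\tfrac{1}{2\delta})$.

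The key observation is then the standing inequality \eqref{eq:longineq}, namely $\tfrac{1}{2\delta}<f_0$, valid throughout the regime $\la\in(\la^\dagger,\a\d)$ under study. Since $\Psi$ is strictly increasing, this yields $\Psi(\tfrac{1}{2\delta})<\Psi(f_0)$, i.e.
\[
\lim_{c\to 0}\hyt(c)=\Psi\!\left(\tfrac{1}{2\delta}\right) < \Psi(f_0)=\lim_{c\to 0}\hat y_3(c).
\]
By continuity there exists $c_0'\in(0,c_1)$ such that $\hyt(c)<\hat y_3(c)$ for every $c\in(0,c_0']$; equivalently, by Lemma \ref{rem:Linterp}, $L(f_0+c,c)<0$ on $(0,c_0']$. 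Consequently $\inf\{c\in(0,c_1):L(f_0+c,c)\geq 0\}\geq c_0'>0$, and so $c_2\geq c_0'\wedge c_1>0$, as required.

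There is no real obstacle here: the result is essentially a corollary of the limit identifications in Proposition \ref{cor:1/2d0}, the geometric equivalence in Lemma \ref{rem:Linterp}, and the parameter inequality \eqref{eq:longineq} that is characteristic of the new regime $\la\in(\la^\dagger,\a\d)$. The only point to be careful about is to ensure that Lemma \ref{rem:Linterp} is indeed applicable for all $c\in(0,c_1)$, which it is because that lemma was stated precisely on this interval.
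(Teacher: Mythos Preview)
Your proof is correct and follows essentially the same route as the paper's own argument: the paper simply says that by Proposition \ref{cor:1/2d0} one has $\hyt(c)<\hat y_3(c)$ for small $c$, then appeals to the equivalence in Lemma \ref{rem:Linterp}. Your write-up makes explicit the steps the paper leaves implicit (the limit $\hat y_3(c)\to\Psi(f_0)$, the use of \eqref{eq:longineq}, and the monotonicity of $\Psi$), but the logic is identical.
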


\begin{proof}
By Proposition \ref{cor:1/2d0}, for $c$ 
sufficiently small we have $\hyt(c)<\hat y_3(c)$. 

\end{proof}
\hfill $\Box$


\subsection{Differentiability and monotonicity}

Let us write 
\begin{equation}\label{eq:Vstar}
V^*(x,c) := \fla x^2 + \flas + A^*(c) e^{x\sta} + B^*(c) e^{-x\sta}
\end{equation}
for arbitrary differentiable functions $A^*$ and $B^*$ (cf. \eqref{eq:Qdef}). If we
\begin{enumerate}
\item[i)]  impose smooth fit between $V^*(\cdot,c)$ and the function $x \mapsto \d x^2$ across a free boundary point $F(c)$ as in \eqref{eq:Qdef}, 
\item[ii)] impose the assumptions $U(G(c),c)=1$, $U_x(G(c),c)=a(c)$, where 
\begin{equation}\label{eq:UUdef}
U(x,c):=(V^*_{x}+V^*_{c})(x,c),
\end{equation}
\end{enumerate}
then it follows from straightforward manipulations that   
\begin{equation}\label{eq:C15}
2 \sta e^{G(c)\sta}q(G(c);F(c)) = (e^{2G(c)\sta}-e^{2F(c)\sta})a(c),
\end{equation}
where 
\begin{eqnarray}\label{eq:qdef}
q(x;z)&=& \sta (h_2(z)-h_1(z)e^{2z\sta}) + h_3(x)e^{2z\sta} - h_4(x) \\
&=& -2z\frac{\a\d-\la} \a e^{z\sta} + \fla e^{x\sta} \left[\left(\atl -x -\frac 1 \sta \right)e^{2(z-x)\sta} \right.\nonumber\\ && \qquad + \left.\left(\atl -x +\frac 1 \sta \nonumber
\right)
\right].
\end{eqnarray}
\begin{Remark} A detailed derivation of \eqref{eq:C15} is given in (8.10)-(8.22) of \cite{KOWZ00} in the case $a(c)=0$. When $a(c) \neq 0$, \eqref{eq:C15} is equation (C.15) of the latter paper. Alternatively the validity of \eqref{eq:C15} in the present setting follows directly from the arbitrariness of $A^*(\cdot)$ and $B^*(\cdot)$ in \eqref{eq:Vstar}, which equal $\cH_3(G(\cdot),\cdot)$ and $\cH_4(G(\cdot),\cdot)$ respectively in the latter paper but equal $\tilde \cH_3(G(\cdot),\cdot)$ and $\tilde \cH_4(G(\cdot),\cdot)$ respectively in the present work.
\end{Remark}
We note here that 
\begin{eqnarray}
q(z;z)&=&e^{z\sta}(1-2\d z), \label{eq:qprop1}\\
q_x(x;z)&=& \la \sqrt{\frac 2 \a}\left( \atl - x \right)e^{x\sta}(1-e^{2(z-x)\sta}), \quad z \leq x < \infty. \label{eq:qprop2}
\end{eqnarray}

\begin{Proposition}\label{cor:1/2d}
Defining $\hyo(0)=\hyt(0)=\Psi(\fotd)$, the functions $\hyo$, $\hyt$ defined in Proposition \ref{existencegeometric} both lie in $C[0,c_2)\cap C^1(0,c_2)$. 
\end{Proposition}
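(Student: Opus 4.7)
The plan is to prove the statement in two parts. Continuity at $c=0$ is immediate from Proposition~\ref{cor:1/2d0}, which yields $\lim_{c\to 0}\hyo(c)=\lim_{c\to 0}\hyt(c)=\Psi(\fotd)$, together with the prescribed values $\hyo(0)=\hyt(0)=\Psi(\fotd)$. The substantive content is the $C^1$ claim on $(0,c_2)$, which I would obtain by applying the implicit function theorem to the defining system \eqref{eq:system-y1y2}.

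Concretely, I would write this system as $F(y_1,y_2,c)=0$ with
\begin{align*}
F_1(y_1,y_2,c) &:= H_l'(y_1) - \pd{H_r}{y}(y_2;c), \\
F_2(y_1,y_2,c) &:= H_l(y_1) - y_1 H_l'(y_1) - H_r(y_2;c) + y_2 \pd{H_r}{y}(y_2;c),
\end{align*}
the second equation being the intercept-matching condition $I(y_1;0)=I(y_2;c)$. A direct differentiation then gives
\[
\det\begin{pmatrix}\partial_{y_1}F_1 & \partial_{y_2}F_1\\ \partial_{y_1}F_2 & \partial_{y_2}F_2\end{pmatrix}= H_l''(y_1)\, H_{r,yy}(y_2;c)\,(y_2-y_1).
\]

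The key step is to verify that this determinant is strictly positive at $(\hyo(c),\hyt(c))$ for each $c \in (0,c_2)$. The factor $y_2 - y_1$ is positive from the ordering $\hyo(c) < y_c < \hyt(c)$ supplied by Proposition~\ref{existencegeometric}. To see $H_l''(\hyo(c)) > 0$, I would combine $\hyo(c) \in (1,\Psi(f_0))$ (from \eqref{eq:hyoneq1} and \eqref{eq:hyobound}) with Lemma~\ref{lem:geomlemma} applied to \eqref{hlcc}, the latter being strictly positive on $[0,f_0]$ as established in the proof of Lemma~\ref{lem:Hlprops}. For $H_{r,yy}(\hyt(c);c) > 0$ I would invoke the strict inequality $\hyt(c) > y_v(c)$ together with the sign analysis of \eqref{hr1cc}--\eqref{hr2cc} in the proof of Lemma~\ref{lem:Hconvexity}, once more via Lemma~\ref{lem:geomlemma}.

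Since $F$ is smooth in all arguments (the usual transformation preserves regularity) and the Jacobian is non-singular, the implicit function theorem produces $C^1$ functions solving \eqref{eq:system-y1y2} on a neighbourhood of each $c_* \in (0,c_2)$; uniqueness in Proposition~\ref{existencegeometric} then identifies these with $\hyo,\hyt$, yielding the desired regularity and, a fortiori, continuity on $(0,c_2)$. I expect the main obstacle to be the Jacobian non-degeneracy, but as outlined it reduces cleanly to strict-convexity facts already in hand.
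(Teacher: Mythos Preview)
Your approach is essentially the same as the paper's: continuity at $c=0$ from Proposition~\ref{cor:1/2d0}, and $C^1$ regularity on $(0,c_2)$ via the implicit function theorem applied to \eqref{eq:system-y1y2}, with the Jacobian determinant $(\hyt-\hyo)H_l''(\hyo)\,\partial_{yy}H_r(\hyt;c)$ shown to be strictly positive by the same strict-convexity facts. The one point the paper makes more explicit is that the restriction $c<c_2$ (via Lemma~\ref{rem:Linterp} and \eqref{eq:c2def}) forces $\hyt(c)<\hat y_3(c)$, so that $H_r=H_{r1}$ near $\hyt(c)$ and is genuinely $C^2$ there; your ``$F$ is smooth in all arguments'' glosses over the fact that $H_r$ is only $C^1$ across $\hat y_3(c)$, so the IFT hypothesis needs this localisation.
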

\begin{proof}
It is sufficient to apply the Implicit Function Theorem to system \eqref{eq:system-y1y2} with respect to the independent variable $c$ as in Appendix A of \cite{DeAFeMo14}. Here the corresponding matrix of derivatives  
has determinant $$D(\hyo(c),\hyt(c),c)=(\hyt(c)-\hyo(c))\pd {^2H_l}{y^2}(\hyo(c)) \pd{^2H_r} {y^2} (\hyt(c);c).$$ 
Recalling Proposition \ref{existencegeometric}, this determinant is strictly positive for $c \in (0,c_2)$ since then $\hyo(c) \in (1,\Psi(f_0))$ (by \eqref{eq:hyoneq1}) and $\hyt(c) \in (y_v(c),\hat y_3(c))$ (by Lemma \ref{rem:Linterp}), and the functions $H_l(\cdot)$, $H_r(\cdot;c)$ ($=H_{r1}(\cdot;c)$) are twice differentiable and strictly convex on these respective intervals. The limit established in Proposition \ref{cor:1/2d0} completes the argument.
\end{proof}
 \hfill $\Box$
 
\vspace{1mm}
We will now define $c_0$ such that $G'(c)>1$ on $(0,c_0)$. 
Note first that $L(G(c),c)$ is well defined and identical to 0 for each $c \in (0,c_2)$ because of the double tangency in Figure \ref{fig:Fig1synth}. It also follows by the continuity of $(x,c) \mapsto \tilde \cH_3(x,c)$ that the map 
$(x,c) \mapsto L(x,c)$ is well defined in an open neighbourhood of $(G(c),c)$ and by Remark \ref{rem:H3c} it is differentiable in this neighbourhood. Writing $z=h_1^{-1}(\tilde \cH_3(x,c))$, the following formulae are established in Appendix  \ref{app:hall} (where again $x$, $c$ must be chosen from a suitable domain):
 \begin{eqnarray}\label{eq:lx}
L_x (x,c) &=& \pd {\tilde \cH_3} x (x,c) (e^{2z\sta}-e^{2x\sta}),\\
(L_x+L_c)(G(c),c)&=& q(G(c),F(c)).\label{eq:hall3}
\end{eqnarray}

\begin{Lemma}\label{cor:FGC1}
For $c \in (0,c_2)$ we have $L_x(G(c),c)<0$ and 
\begin{equation}\label{eq:Gderiv}
G'(c) = 1 - \frac{q(G(c);F(c))}{L_x(G(c),c)}.
\end{equation}
\end{Lemma}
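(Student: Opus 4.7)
The plan is to read off both statements almost directly from the two identities \eqref{eq:lx} and \eqref{eq:hall3}, once we have justified that the quantities involved are well defined and have the expected signs at $(G(c),c)$ for $c\in(0,c_2)$.

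First I would establish the sign of $L_x(G(c),c)$. By the construction of the common tangent (Proposition \ref{existencegeometric}) and the definition \eqref{eq:hidefs1}, the tangency on $H_l$ occurs at $\hyo(c)=\Psi(F(c))$ and its gradient equals $h_1(F(c))=\tilde\cH_3(G(c),c)$. Hence the auxiliary quantity $z=h_1^{-1}(\tilde\cH_3(G(c),c))$ that appears in \eqref{eq:lx} is precisely $F(c)$, and $F(c)<G(c)$ gives
\[
e^{2z\sta}-e^{2G(c)\sta}<0.
\]
The other factor in \eqref{eq:lx} is $\tilde\cH_{3,x}(G(c),c)$. Using $\tilde\cH_3(x,c)=(H_{r1})_y(\Psi(x);c)$ and $\Psi'>0$, this factor has the sign of $(H_{r1})_{yy}(\hyt(c);c)$, which is strictly positive because $c\in(0,c_2)$ forces $\hyt(c)\in(y_v(c),\hat y_3(c))$, a region on which $H_{r1}(\cdot;c)=H_r(\cdot;c)$ is strictly convex by Lemma \ref{lem:Hconvexity}. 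Consequently $L_x(G(c),c)<0$.

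Second, I would differentiate an identity of the form $L(G(c),c)\equiv 0$ in $c$. The required identity is valid on $(0,c_2)$: by the definition \eqref{eq:c2def} and Lemma \ref{rem:Linterp} we have $\hyt(c)<\hat y_3(c)$, so the common tangent $r_{\hyt(c)}(\cdot;c)$ is tangent to $H_{r1}(\cdot;c)$ at $\hyt(c)$ \emph{and} tangent to $H_l(\cdot)$ at $\hyo(c)$, which is exactly the geometric content of $L(G(c),c)=0$. By Proposition \ref{cor:1/2d} the function $G$ is $C^1$ on $(0,c_2)$, and we already noted $L$ is differentiable in an open neighbourhood of $(G(c),c)$, so the chain rule yields
\[
L_x(G(c),c)\,G'(c)+L_c(G(c),c)=0.
\]
Solving for $G'(c)$ (permitted by the first step) gives $G'(c)=-L_c(G(c),c)/L_x(G(c),c)$. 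Substituting $L_c(G(c),c)=q(G(c),F(c))-L_x(G(c),c)$ from \eqref{eq:hall3} produces
\[
G'(c)=1-\frac{q(G(c);F(c))}{L_x(G(c),c)},
\]
which is the claimed formula.

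There is no real obstacle here: the content of the lemma is already compressed into \eqref{eq:lx} and \eqref{eq:hall3}. The only substantive check is the sign of $\tilde\cH_{3,x}(G(c),c)$, and that reduces to the strict convexity of $H_{r1}(\cdot;c)$ at the right tangency point, which is exactly the geometric picture underlying the construction of $c_2$ via Lemma \ref{rem:Linterp}.
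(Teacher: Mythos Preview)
Your proposal is correct and follows essentially the same route as the paper: use \eqref{eq:lx} together with $z=F(c)<G(c)$ and the strict convexity of $H_{r1}(\cdot;c)$ at $\hyt(c)$ to get $L_x(G(c),c)<0$, then differentiate the identity $L(G(c),c)\equiv 0$ in $c$ and substitute \eqref{eq:hall3}. You have simply made explicit some points (e.g.\ why $z=F(c)$, why $L(G(c),c)\equiv 0$, and the $C^1$ regularity of $G$ from Proposition~\ref{cor:1/2d}) that the paper leaves implicit or records just before the lemma.
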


\begin{proof}
Fix $c \in (0,c_2)$. By strict convexity $\pd{}{x} \tilde \cH_3(G(c),c)$ is strictly positive 
and it follows from \eqref{eq:lx} that 
$L_x(G(c),c)< 0$. Differentiation of the identity
\begin{equation}\label{eq:Lis0}
L(G(c),c) \equiv 0 \text{ for all } c \in (0,c_2)
\end{equation}
with respect to $c$ and using \eqref{eq:hall3} then gives \eqref{eq:Gderiv}.
\hfill$\Box$
\end{proof}

\vspace{1mm}
The sign of $q(G(c);F(c))$ may be established from \eqref{eq:qprop1} and \eqref{eq:qprop2} using the following lemma. 
\begin{Lemma} \label{lem:Fcsmall}
For sufficiently small $c>0$ we have $F(c)<\frac 1 {2\d}$.
\end{Lemma}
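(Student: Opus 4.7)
The plan is to expand $F(c)$ and $G(c)$ around $c = 0$ using the smooth-fit equations and exhibit a strictly negative leading coefficient for $F - \tfrac{1}{2\delta}$. Writing $F(c) = \tfrac{1}{2\delta} + f(c)$, $G(c) = \tfrac{1}{2\delta} + g(c)$ and $\Delta := g - f$ (with $f(0) = g(0) = 0$ by Proposition~\ref{cor:1/2d}), I would combine the smooth-fit relations $A(c) = h_1(F)$, $B(c) = h_2(F)$ at $F$ with smooth fit at $G$ between the middle piece of \eqref{eq:Qdef} and $\tilde V_0(x - c) + c = \delta(x-c)^2 + c$ (valid because $G(c) - c < f_0$ for small $c$) to eliminate $A, B$, arriving at the coupled system
\begin{align*}
\gamma\rho(F) e^{\Delta\sqrt{2\alpha}} &= \gamma\rho(G) - \tfrac{2\delta c}{\sqrt{2\alpha}} + \delta c^2 - 2c\delta g,\\
\gamma R(F) e^{-\Delta\sqrt{2\alpha}} &= \gamma R(G) + \tfrac{2\delta c}{\sqrt{2\alpha}} + \delta c^2 - 2c\delta g,
\end{align*}
with $\gamma := (\alpha\delta - \lambda)/\alpha$ and $R(x) := \rho(x) - 4x/\sqrt{2\alpha}$.

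Expanding the first equation to first order in $c$ would yield $\Delta = \mu c + o(c)$ with $\mu := 2\delta/(\sqrt{2\alpha}\gamma K)$ and $K := \rho'(\tfrac{1}{2\delta}) - \sqrt{2\alpha}\rho(\tfrac{1}{2\delta})$; a direct computation from the definition of $\rho$ establishes the useful identity $\sqrt{2\alpha}\gamma K = 2\delta - \alpha\gamma/(2\delta^2)$, forcing $\mu > 1$. The first-order system is degenerate in $(f, g)$ individually, so to isolate $f$ I would sum the two equations and use $\rho(x) + R(x) = 2(x^2 - \kappa)$ with $\kappa := \lambda/(\alpha^2\gamma)$; expanding to second order this collapses to
\[\gamma\Delta^2 \bigl[\alpha(F^2 - \kappa) - 1\bigr] = \delta c^2 - 2c\delta g + o(c^2),\]
from which $g_1 := \lim_{c \to 0^+} g(c)/c$ and hence $f_1 := \lim f(c)/c = g_1 - \mu$ can be read off.

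The main obstacle will be signing $f_1$. Inequality \eqref{eq:longineq} forces $f_0 > \tfrac{1}{2\delta}$, hence $\rho(\tfrac{1}{2\delta}) < 0$ and $\kappa > \tfrac{1}{4\delta^2}$. A further algebraic reduction, $1 + \alpha(\kappa - \tfrac{1}{4\delta^2}) = (4\delta^3 - \alpha\gamma)/(4\delta^2\gamma)$ (obtained by clearing denominators and using $\alpha\gamma + \lambda = \alpha\delta$), then delivers the crucial bound $\alpha\gamma < 4\delta^3$. Rearranging $f_1 < 0$ gives $4\delta^3(\mu - 1)^2 < \alpha\gamma\mu^2$; substituting $\mu - 1 = \alpha\gamma/(2\delta^2\sqrt{2\alpha}\gamma K)$ (a consequence of the identity above) and cancelling $K^2$ reduces the condition to precisely $\alpha\gamma < 4\delta^3$, already established. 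Therefore $f_1 < 0$, so $F(c) < \tfrac{1}{2\delta}$ for sufficiently small $c > 0$.
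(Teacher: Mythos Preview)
Your approach is genuinely different from the paper's. The paper argues by contradiction: assuming $F(c)\ge \tfrac{1}{2\delta}$, it uses the convexity of $H_l$ to show that the common tangent dominates the tangent at $\Psi(\tfrac{1}{2\delta})$ at the point $y_r=\Psi(\tfrac{1}{2\delta}+c)$, which in turn forces a lower bound on $V(x_r;c)$ (with $x_r=\tfrac{1}{2\delta}+c$); comparing this with $h(x_r;c)$ and Taylor expanding yields
\[
V(x_r;c)-h(x_r;c)\ \ge\ \frac{\alpha\delta-\lambda}{4\delta^2}\,c^2+O(c^3),
\]
contradicting $V\le h$ for small $c$. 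By contrast, you compute the one-sided derivative $f_1=F'(0^+)$ directly from the smooth-fit system and show $f_1<0$. Your algebra checks out: the identity $\sqrt{2\alpha}\,\gamma K=2\delta-\alpha\gamma/(2\delta^2)$, the collapse of the summed equation to $\gamma\Delta^2[\alpha(F^2-\kappa)-1]=\delta c^2-2c\delta g+o(c^2)$, and the final reduction to $\alpha\gamma<4\delta^3$ are all correct (and indeed $\alpha\gamma<4\delta^3$ follows already from $K>0$, which holds because $\rho(\tfrac{1}{2\delta})<0$). Your route yields more quantitative information (the actual slope $f_1$), while the paper's is shorter and leverages the geometric machinery already in place.

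There is one genuine gap in your plan. You assume throughout that $f(c)/c$ and $g(c)/c$ have limits as $c\to 0^+$, but the paper only establishes $F,G\in C[0,c_2)\cap C^1(0,c_2)$; differentiability at $c=0$ is \emph{not} provided. Crucially, the first-order linearisation of your two-equation system at $(F,G,c)=(\tfrac{1}{2\delta},\tfrac{1}{2\delta},0)$ is rank one (both equations reduce to $\gamma K\Delta=2\delta c/\sqrt{2\alpha}$, as you observe), so the implicit function theorem does not directly yield a $C^1$ branch through $c=0$. To make the argument rigorous you must first prove that $f(c),g(c)=O(c)$ (so that the higher-order remainders in your expansions are genuinely $o(c^2)$), and then extract the limits $g_1$ and $f_1$ from the second-order relation. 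This can be done---for instance by a blow-up $(\tilde f,\tilde g):=(f/c,g/c)$ and a compactness/uniqueness argument, or by bounding $|f|,|g|$ a priori from the geometry---but as written the proposal skips this step. The paper's contradiction argument sidesteps the issue entirely because it never needs the existence of $F'(0^+)$: it compares two explicit functions of $c$ and Taylor expands only in the scalar variable $c$.
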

\begin{proof}
Assume that 
$\fotd \leq F(c)$ where $c \in (0,c_2)$.
In the transformed coordinates of Figure \ref{fig:Fig1synth}, by Proposition \ref{existencegeometric} we then have 
\begin{equation} \label{eq:llongineq}
\Psi \lb\fotd\rb\leq \Psi(F(c)) = \hyo(c) < y_c = \Psi\lb\fotd + \frac c 2\rb < \Psi\lb\fotd + c\rb =: y_r.
\end{equation}
It then follows from the convexity of $H_l$ that the common tangent lies above the tangent $ r_{\Psi(\fotd)}(\cdot;c)$ at $y=y_r$, that is
\begin{equation}\label{eq:dom}
r_{\hyo(c)}(y_r;c) \geq r_{\Psi(\fotd)}(y_r;c). 
\end{equation}
The common tangent also lies below $H(\cdot;c)$. Putting $x_r=\frac 1 {2\d} + c$ and using Propositions \ref{prop:DayKar} and \ref{existencegeometric}  and equation \eqref{def-V} this implies that 
\begin{eqnarray*}
V(x_r;c) &\geq& A(c) e^{x_r\sta} + B(c) e^{-x_r\sta} \nonumber\\
&\geq& A\lb\fotd \rb e^{x_r\sta} + B \lb \fotd \rb e^{-x_r\sta},
\end{eqnarray*}
where the second inequality follows from \eqref{eq:dom}. Since $x_r \in (x_c, f_0+c)$ it follows from the first equation in \eqref{def-G1} that $h(x_r;c)=-\flas - \fla x_r^2 + c + \frac 1 {4\d}$. Since by definition $V(x_r;c) \leq h(x_r;c)$, the estimate 
\begin{eqnarray*}
&&V(x_r;c) - h(x_r;c) \\
&& \; \geq A\lb\fotd \rb e^{(\fotd+c)\sta} + B \lb \fotd \rb e^{-(\fotd+c)\sta} + \flas + \fla \lb \fotd+c \rb^2 - c - \frac 1 {4\d}\\ && \; = \frac{\a\d-\la}{4\d^2}c^2 + O(c^3)
\end{eqnarray*}
establishes the required conclusion. \end{proof} \hfill $\Box$

\vspace{1mm}
Finally we define $c_0$ and establish the monotonicity of the moving boundaries.
\begin{Proposition}\label{prop:monotonicity}Defining
\begin{equation}\label{lem:Gderv}
c_0:= c_2 \wedge \inf \{c \in (0,c_2): G'(c) \leq 1\},
\end{equation}
we have $c_0>0$ and the moving boundaries $c \to F(c)$ and $c \to G(c)$ are respectively decreasing and increasing for $c \in (0,c_0)$.
\end{Proposition}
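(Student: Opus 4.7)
The plan is to dispatch the three claims in turn: strict positivity of $c_0$, then the two monotonicity statements. For the strict positivity, I would start from the explicit formula \eqref{eq:Gderiv} together with $L_x(G(c),c) < 0$ (Lemma \ref{cor:FGC1}), so that $G'(c) > 1$ is equivalent to $q(G(c);F(c)) > 0$. To establish the latter for small $c$, I use \eqref{eq:qprop2}: on the range $F(c) \leq x < \atl$ the two factors $\atl - x$ and $1 - e^{2(F(c)-x)\sta}$ are both nonnegative, so $x \mapsto q(x;F(c))$ is nondecreasing there. Hence $q(G(c);F(c)) \geq q(F(c);F(c)) = e^{F(c)\sta}(1 - 2\delta F(c))$ by \eqref{eq:qprop1}. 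For $c$ sufficiently small, Lemma \ref{lem:Fcsmall} gives $F(c) < \fotd$ and Proposition \ref{cor:1/2d0} combined with \eqref{eq:longineq} gives $G(c) < \atl$, so this lower bound is strictly positive. Thus $G'(c) > 1$ for small $c$, proving $c_0 > 0$.

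The monotonicity of $G$ on $(0,c_0)$ is then immediate from the defining inequality $G'(c) > 1 > 0$.

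For the monotonicity of $F$, I would apply the Implicit Function Theorem to the system \eqref{eq:system-y1y2} in a manner parallel to the proof of Proposition \ref{cor:1/2d}. Differentiating both equations in $c$ and eliminating $\hyt'(c)$ by a suitable linear combination yields the compact formula
\[
\hyo'(c) \;=\; \frac{\partial_c H_r(\hyt(c);c)}{(\hyt(c) - \hyo(c))\,\partial_{yy} H_l(\hyo(c))}.
\]
The denominator is strictly positive since $\hyt > \hyo$ and $\hyo(c) \in (1,\Psi(f_0))$ lies in the strictly convex region of $H_l$. For $c \in (0,c_2)$ Lemma \ref{rem:Linterp} places $\hyt(c)$ in the $H_{r1}$ regime, and a direct computation from \eqref{def-G1} and the usual transformation gives $\partial_c H_r(\hyt(c);c) = (1 - 2\delta(G(c)-c))e^{G(c)\sta}$. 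Therefore the sign of $\hyo'(c)$, and hence of $F'(c)$ (since $\Psi^{-1}$ is increasing), is determined by the sign of $\fotd - (G(c)-c)$.

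The remaining step, which I would expect to be the crux linking the two claims, is to show $G(c) - c > \fotd$ for every $c \in (0,c_0)$. This follows by integration: $G(0) - 0 = \fotd$ by Proposition \ref{cor:1/2d0}, and $\frac{d}{dc}(G(c) - c) = G'(c) - 1 > 0$ on $(0,c_0)$ by the very definition of $c_0$. Hence $\partial_c H_r(\hyt(c);c) < 0$, so $F'(c) < 0$ on $(0,c_0)$, completing the argument. The main obstacle here is the derivation of the formula for $\hyo'(c)$: it must be manipulated into a form in which the numerator depends only on quantities at $\hyt(c)$, since otherwise one would also need control over $\hyt'(c)$, and this is precisely where the bilinear structure of \eqref{eq:system-y1y2} pays off.
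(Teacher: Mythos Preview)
Your argument is correct and, for the positivity of $c_0$ and the monotonicity of $G$, coincides with the paper's: both show $q(G(c);F(c))>0$ for small $c$ via \eqref{eq:qprop1}--\eqref{eq:qprop2} combined with $F(c)<\fotd$ and $G(c)<\atl$, then invoke \eqref{eq:Gderiv}.

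The treatment of $F$ differs. You differentiate the system \eqref{eq:system-y1y2} and eliminate $\hyt'(c)$ to obtain the closed formula
\[
\hyo'(c)=\frac{\partial_c H_r(\hyt(c);c)}{(\hyt(c)-\hyo(c))\,\partial_{yy}H_l(\hyo(c))},
\]
which is correct and reads off the sign directly once $\partial_c H_r(\hyt(c);c)<0$ is known. The paper instead argues geometrically by contradiction: fixing $\tilde y=\hyt(\tilde c)$, if $\hyo'(\tilde c)\geq 0$ then, by convexity of $H_l$, the common tangent evaluated at $\tilde y$ is nondecreasing in $c$, whereas $H_r(\tilde y;c)$ is strictly decreasing in $c$; since tangent and obstacle agree at $c=\tilde c$ and the tangent must lie below the obstacle for all $c$, this is impossible. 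Both routes rest on the same key inequality $G(c)-c>\fotd$ (obtained, as you do, by integrating $G'>1$ from $G(0)=\fotd$), so the arguments are essentially dual: yours is the analytic Implicit Function Theorem version, the paper's is its geometric paraphrase. Your explicit formula has the merit of making the dependence on the data transparent; the paper's version avoids the linear-algebra manipulation but requires the reader to see why moving the tangency point along a convex curve raises the tangent line at a fixed point to the right.
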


\begin{proof}
We have established that for sufficiently small $c$: 
\[
F(c)<\fotd  \quad  \text{ and } \quad G(c)<\atl
\]
(for the second inequality recall \eqref{eq:longineq}). Taking $z=F(c)$ with $c$ fixed and sufficiently small, from \eqref{eq:qprop1} we have 
$q(F(c),F(c))>0$ and 
 for $x \in (F(c), G(c))$ we have $q_x(x;F(c))>0$ from \eqref{eq:qprop2}. 
Then $q(G(c);F(c))>0$ and $G'(c)>1$ (Lemma \ref{cor:FGC1}). We conclude that $c_0>0$.

The idea used in the proof of Lemma \ref{lem:Fcsmall} provides a convenient way to establish the monotonicity of $F$, as follows. Fixing $\tilde c \in (0,c_0)$, we note that the common tangent and the transformed obstacle coincide when $y=\tilde y:=\hyt(\tilde c)$. We then fix $y=\tilde y$ and vary $c$ locally around $\tilde c$. 

From Proposition \ref{cor:1/2d} we have  $G(\tilde c) > \frac 1 {2\d}+\tilde c$. Differentiation of the first equation in \eqref{def-G1} yields $\pd {h_{r1}} c (G(\tilde c); \tilde c)=1-2\d(G(\tilde c)-\tilde c)<0$, so that also $\pd {H_{r1}} c (\hyt(\tilde c); \tilde c)<0$ by the usual transformation: the value of the transformed obstacle at  $\tilde y$ is locally decreasing in $c$. However the transformed obstacle $H_l$ is convex and does not depend on $c$; suppose for a contradiction that $\hyo'(\tilde c) \geq 0$. Then at $\tilde y$, the value of the common tangent is locally increasing in $c$. This is a contradiction since the common tangent must lie below the transformed obstacle for all $c$ (Proposition \ref{existencegeometric}).
\hfill$\Box$
\end{proof}

\section{Verification}
\label{Sec:verification}

We now verify the optimality of the following policy for initial states $(x,c) \in [0,\infty) \times [0,c_0)$, which is illustrated in the top panel of Figure \ref{fig:boundaries}:
\begin{enumerate}
\item[a)] If the fuel level is $c=0$:
\begin{enumerate}
\item[i)] stop immediately if $0 \leq X_t \leq f_0$, 
\item[ii)] continue if $X_t>f_0$.
\end{enumerate}
\item[b)] If the fuel level is $c \in (0,c_0)$:
\begin{enumerate}
\item[i)] stop immediately if $0 \leq X_t \leq F(c)$, 
\item[ii)] continue without expending fuel if $F(c)<X_t<G(c)$,
\item[iii)] if $X_t \geq G(c)$ then expend all available fuel immediately and  proceed as in part a).
\end{enumerate}
\end{enumerate}

More precisely, for $c>0$ define the stopping times 
\begin{equation}
\begin{array}{ll}
\tau_c=\inf\{t \geq 0: X_t \in [0,F(c)]\}, &
\sigma_c=\inf\{t \geq 0: X_t \geq G(c)\}, \\
\tau_0=\inf\{t \geq \sigma_c: X_t-c \in [0,f_0]\}, & 
\tau_c^*=\min \{  \tau_{c}, \tau_{0} \},\\
\tau^*=\min \{  \tau_c, \sigma_c \},&
\end{array}
\end{equation}
which almost surely are finite with $\tau^* \leq \tau_c^*$.
Define $\xi^c \in \cA(c)$ by $\xi^c_t=-c1_{t \geq \sigma_c}, \; t \geq 0$. Then setting $\tau=\tau_c^*$, $\xi=\xi^c$ in the control problem we obtain
\[
Y_t = \begin{cases}
X_t, & t \in [0,\sigma_c), \\ 
\tilde X_t, & t \geq \sigma_c,
\end{cases}
\]
where $\tilde X_t:=X_t-c$. 
The expected cost of this policy is (cf. \eqref{eq:defV}):
\begin{multline}\label{eq:sm}
\EE\left[ \int_0^{\tau^*} e^{-\alpha t}\lambda X^2_t dt  + e^{-\alpha\tau^*}\delta X^2_{\tau_c}1_{\tau^* = \tau_c}
\right. \\ \left. + e^{-\alpha \tau^*}  1_{\tau^* = \sigma_c} \lb c + \EE\left[ \int_{0}^{\tau_0-\sigma_c} e^{-\alpha s}\lambda \tilde X^2_s ds +  e^{-\alpha(\tau_0-\sigma_c)}\delta \tilde X^2_{\tau_0}  
\right] \rb \right]. \end{multline}
Setting $s = t - \sigma_c$, by the strong Markov property the process $(\tilde X_s)_{s \geq 0}$ is a Brownian motion starting at $X_{\sigma_c}-c$. 
Recalling the definition of $\tau_0$, the `inner' expectation in \eqref{eq:sm} is therefore equal to $V_0(X_{\sigma_c}-c)$ and so \eqref{eq:sm} equals
\begin{multline} \label{eq:markov}
\EE\left[ \int_0^{\tau^*} e^{-\alpha t}\lambda X^2_t dt  + e^{-\alpha\tau^*} \lb \delta X^2_{\tau_c}1_{\tau^* = \tau_c}
 +  \lb c  + V_0(X_{\tau^*}-c) \rb 1_{\tau^* = \sigma_c}
 \rb \right]. \end{multline}
By construction the stopping time $\tau^*$ is optimal in problem \eqref{eq:defqc} (Proposition \ref{prop:DayKar}). By considering separately the events $\{\tau^* = \tau_c\}$ and $\{\tau^* = \sigma_c\}$ and glancing at Figure \ref{fig:Fig1synth}, it is therefore easy to see that the expectation \eqref{eq:markov} is equal to the value function \eqref{eq:defqc}. We conclude by the suboptimality of the pair $(\tau_c^*,\xi^c)$ for the control problem that $\tilde V(x;c) \geq Q(x;c)$.

For the reverse inequality we will apply the following verification theorem:

\begin{theorem}
\label{th:ver}
Consider a positive constant $c_0$ and an evenly symmetric function $\tilde Q: \R \times [0,c_0) \to [0,\infty)$ which is continuous and continuously differentiable, as well as twice continuously differentiable in its first argument with locally bounded second derivatives away from $\{(x,c): x \in \{f(c), g(c), f_0+c\}\}$, for some continuous functions $f$, $g :[0,c_0) \to (0,\infty)$ 
with 
\begin{eqnarray*}
&f, \; g \in C^1((0,c_0)), \; f'(\cdot) \leq 0, g'(\cdot)>0,\\
&f(0) \leq g(0), \text{ and }f(c)<g(c), \forall c \in (0,c_0). 
\end{eqnarray*}
Suppose that $\tilde Q(\cdot,c)$ satisfies the growth condition
\begin{gather}\label{eq:42}
|\tilde Q_x(x,c)| \leq K(c)(1+|x|), \qquad \forall \; x \geq 0, c \in [0,c_0),
\end{gather}
for some continuous and increasing function $K:[0,\infty) \to (0,\infty)$. Suppose also that $\tilde Q$ satisfies the following conditions:
\begin{gather}
\tilde Q(x,c) \leq \d x^2, \quad x\geq 0, c \in [0,c_0); \label{eq:43} \\
\tilde Q_x(x,c) + \tilde Q_c(x,c) \leq 1, \quad x \geq 0, c \in (0,c_0); \label{eq:44}\\
\a \tilde Q(x,c) \leq \frac 1 2 \tilde Q_{xx}(x,c) + \lambda x^2, \quad x \in [0,\infty) \setminus \{f(c),g(c),f_0+c\}, c \in [0,c_0); \label{eq:45}
\end{gather}
and
\begin{gather}\label{eq:46}
[\d x^2 - \tilde Q(x,c)]\cdot[1-\tilde Q_x(x,c)-\tilde Q_c(x,c)]\cdot\left[\frac 1 2 \tilde Q_{xx}(x,c)+\lambda x^2-\a \tilde Q(x,c)\right]=0
\end{gather}
for $x \in [0,\infty) \setminus \{f(c),g(c),f_0+c\}$, $c \in (0,c_0)$. Then $Q(x,c)\geq \tilde Q(x,c)$ for all $x \geq 0$, $c \in [0,c_0)$. 
\end{theorem}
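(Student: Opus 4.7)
Fix $(x,c) \in [0,\infty) \times [0,c_0)$ and an arbitrary admissible pair $(\xi,\tau) \in \cA(c) \times \cS$. By Remark \ref{rem:halfspace} we may assume $\xi^+ \equiv 0$, so $\xi=-\xi^-$ for a right-continuous nondecreasing process $\xi^-$, the state is $(Y_t,C_t)=(X_t-\xi^-_t,\,c-\xi^-_t)$, and $\tau<\infty$ almost surely. It suffices to show that the expected cost in \eqref{eq:defV} associated with $(\xi,\tau)$ dominates $\tilde Q(x,c)$; taking the infimum over $(\xi,\tau)$ then yields $Q(x,c) \geq \tilde Q(x,c)$. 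Restricting attention to pairs of finite expected cost is without loss of generality.

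The core step is a generalised It\^o formula for the semimartingale $t \mapsto e^{-\alpha t}\tilde Q(Y_t,C_t)$ on $[0,\tau \wedge \tau_n]$, where $\tau_n:=\inf\{t:|X_t|>n\}\wedge n$. Since $\tilde Q$ is globally $C^1$ and has locally bounded second spatial derivatives off the three smooth curves $x=f(c)$, $x=g(c)$ and $x=f_0+c$, and since $\tilde Q_x+\tilde Q_c$ is continuous throughout, a Meyer--It\^o / Peskir change-of-variable formula applies with the Lebesgue-a.e.\ value of $\tilde Q_{xx}$ and with vanishing local-time corrections along the three curves (continuity of $\tilde Q_x$ across each curve is what kills these corrections). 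Decomposing $\xi^-$ into its continuous part $(\xi^-)^c$ and its countable jumps, this yields
\begin{align*}
e^{-\alpha(\tau\wedge\tau_n)}\tilde Q(Y_{\tau\wedge\tau_n},C_{\tau\wedge\tau_n}) - \tilde Q(x,c)
&= \int_0^{\tau\wedge\tau_n} e^{-\alpha s}\bigl(\tfrac12\tilde Q_{xx}-\alpha\tilde Q\bigr)(Y_s,C_s)\,ds + M_{\tau\wedge\tau_n} \\
&\quad - \int_0^{\tau\wedge\tau_n} e^{-\alpha s}(\tilde Q_x+\tilde Q_c)(Y_s,C_s)\,d(\xi^-)^c_s + J_n,
\end{align*}
where $M_t := \int_0^t e^{-\alpha s}\tilde Q_x(Y_s,C_s)\,dW_s$ and the jump term, obtained from the fundamental theorem of calculus along the direction $(1,1)$ in $(x,c)$, is
\[
J_n \; = \; -\sum_{s \le \tau\wedge\tau_n} e^{-\alpha s}\int_0^{\Delta\xi^-_s}(\tilde Q_x+\tilde Q_c)(Y_{s-}-u,\,C_{s-}-u)\,du.
\]

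The HJB inequalities \eqref{eq:44}--\eqref{eq:45} then give pointwise bounds $(\tfrac12\tilde Q_{xx}-\alpha\tilde Q)(Y_s,C_s) \geq -\lambda Y_s^2$ (Lebesgue-a.e.\ in $s$) and $\tilde Q_x+\tilde Q_c \leq 1$, so the singular-control terms (continuous part plus $J_n$) are dominated below by $-\int_{[0,\tau\wedge\tau_n]}e^{-\alpha s}\,d\xi^-_s = -\int_{[0,\tau\wedge\tau_n]}e^{-\alpha s}\,d\check\xi_s$. Because $\tilde Q_x$ is bounded on $[-n,n]\times[0,c_0)$ by \eqref{eq:42}, the process $M_{\,\cdot\,\wedge \tau_n}$ is a genuine martingale, so taking expectations and rearranging yields
\[
\tilde Q(x,c) \leq \EE\!\left[\int_0^{\tau\wedge\tau_n} e^{-\alpha s}\lambda Y_s^2\,ds + \int_{[0,\tau\wedge\tau_n]} e^{-\alpha s}\,d\check\xi_s + e^{-\alpha(\tau\wedge\tau_n)}\tilde Q(Y_{\tau\wedge\tau_n},C_{\tau\wedge\tau_n})\right].
\]

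To send $n\to\infty$ I would use \eqref{eq:42} to deduce the quadratic bound $0 \leq \tilde Q(y,c) \leq \tilde Q(0,c)+K(c)(|y|+|y|^2/2)$; combined with $|Y_t|\leq|X_t|+c$ and the standard estimate $\EE[\int_0^\infty e^{-\alpha s}(1+X_s^2)\,ds]<\infty$, this furnishes an integrable dominator. Dominated convergence and continuity of $\tilde Q$ then pass the limit through $\tau_n \uparrow \infty$; \eqref{eq:43} gives $\tilde Q(Y_\tau,C_\tau)\leq \delta Y_\tau^2$, which recovers exactly the cost functional of \eqref{eq:defV}. Taking the infimum over $(\xi,\tau)$ concludes. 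I expect the main obstacle to be the simultaneous presence of the potentially jumpy singular control $\xi^-$ and the merely piecewise-$C^2$ regularity of $\tilde Q$ across the three free-boundary curves: these force both the generalised It\^o formula (rather than the classical one) and the FTC representation of $J_n$, both of which are legitimate precisely because $\tilde Q_x+\tilde Q_c$ is continuous across the boundaries.
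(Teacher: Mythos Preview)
Your argument is the standard verification-by-It\^o route and is essentially correct; note, however, that the paper does not spell out a proof of this theorem at all. Immediately after the statement it remarks that the result is a minor modification of Theorem~4.1 of \cite{KOWZ00}: the only changes are (i) allowing an additional curve $x=f_0+c$ across which $\tilde Q$ need not be twice differentiable, and (ii) restricting the fuel domain to $[0,c_0)$, and it observes that the original proof goes through verbatim since the extra non-smooth curve is handled exactly as $x=f(c)$ and $x=g(c)$ are, and the monotone-control reduction of Remark~\ref{rem:halfspace} keeps the state in the restricted domain. What you have written is, in effect, that original proof sketched out in full (generalised It\^o/Meyer--Tanaka formula, use of \eqref{eq:44}--\eqref{eq:45} to bound the drift and control terms, localisation via $\tau_n$, passage to the limit using the quadratic growth from \eqref{eq:42}, and the terminal bound \eqref{eq:43}).

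One small point: you invoke Remark~\ref{rem:halfspace}(iii) to assume $\tau<\infty$ a.s., but that remark explicitly says this restriction ``will be justified by the verification argument'', so citing it here is circular. For the direction $\tilde Q\le Q$ you must allow arbitrary $\tau\in\cS$; on $\{\tau=\infty\}$ your dominated-convergence step already gives $e^{-\alpha(\tau\wedge\tau_n)}\tilde Q(Y_{\tau\wedge\tau_n},C_{\tau\wedge\tau_n})\to 0$ via the quadratic growth bound, matching the indicator $1_{\{\tau<\infty\}}$ in \eqref{eq:defV}, so the fix is immediate.
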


\begin{Remark}
Theorem \ref{th:ver} is a modification of Theorem 4.1 in \cite{KOWZ00}. The modifications are only to remove the requirement that $\tilde Q$ be twice differentiable across the line $x=f_0+c$, and to restrict the domain of $\tilde Q$ to $\R \times [0,c_0)$. The second derivative in $x$ across $x=f_0+c$ can be dealt with in exactly the same way as it is across $x=f(c)$ and $x=g(c)$ in the proof of the latter theorem, and the restriction to monotone controls (Remark \ref{rem:halfspace}) ensures that the latter proof is also valid for $\tilde Q$ defined on this restricted domain. \end{Remark}

Recalling Proposition \ref{existencegeometric} and the definition of $U$ from \eqref{eq:Vstar}-\eqref{eq:UUdef}, from now on we set $A^* \equiv A$ and $B^* \equiv B$. We begin with the following lemma.

\begin{Lemma} \label{lem:atleastone}
For each $c \in (0,c_0)$, the function $x \mapsto U(x,c)$ has at most one turning point on $(F(c),G(c))$.
\end{Lemma}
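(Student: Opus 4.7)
My plan is to derive a linear ODE for $U$ on the continuation strip by differentiating the HJB equation for $V^*$, and then use the fact that any linear combination of $e^{\pm x\sta}$ has at most one zero on $\RR$ to force strict convexity of $U$ on $(F(c),G(c))$. Strict convexity makes $U_x$ strictly monotone and immediately yields the claim.

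I would first observe that on $(F(c),G(c))$ the candidate $V^*$ satisfies the HJB equation $\a V^* = \tfrac12 V^*_{xx} + \la x^2$, equivalently $V^*_{xx} = 2\a V^* - 2\la x^2$. Differentiating this form once in $x$ and once in $c$ and summing yields
\begin{equation*}
U_{xx}(x,c) \;=\; 2\a\, U(x,c) - 4\la x, \qquad x\in (F(c),G(c)).
\end{equation*}
With the particular solution $L(x):=\ftla\, x$, the difference $U-L$ satisfies the homogeneous equation $(U-L)_{xx} = 2\a(U-L)$, so $(U-L)(x,c)=M(c)e^{x\sta}+N(c)e^{-x\sta}$ for some coefficients $M(c),N(c)$. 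Any such exponential combination vanishes at $x$ iff $e^{2x\sta}=-N(c)/M(c)$, which admits at most one real solution; thus $U-L$ (equivalently $U_{xx}$) has at most one zero on $\RR$.

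Next I would evaluate $U-L$ at the boundary. From value matching $V^*(F(c),c)=\d F(c)^2$ together with smooth fit $V^*_x(F(c),c)=2\d F(c)$, differentiating the first in $c$ yields $V^*_c(F(c),c)=0$, so $U(F(c),c)=2\d F(c)$ and
\begin{equation*}
(U-L)(F(c),c)\;=\;2\bigl(\d-\fla\bigr)F(c)\;>\;0
\end{equation*}
by the standing hypothesis $\a\d>\la$ visible in \eqref{eq:longineq} and by $F(c)>0$. At the upper boundary $U(G(c),c)=1$ by construction, so $(U-L)(G(c),c)=1-\ftla\,G(c)\geq 0$ provided $G(c)\leq \atl$; this bound is exactly the inequality established for sufficiently small $c$ in the proof of Proposition \ref{prop:monotonicity}, and I would invoke it throughout $(0,c_0)$ (possibly after further tightening $c_0$ so that it holds without exception). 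Since $U-L$ has at most one zero on $\RR$ and takes nonnegative values at both endpoints of $[F(c),G(c)]$ with strict positivity at $F(c)$, an interior sign change is impossible; hence $U-L>0$ on $(F(c),G(c))$, $U_{xx}=2\a(U-L)>0$ there, $U(\cdot,c)$ is strictly convex, and $U_x(\cdot,c)$ is strictly increasing. This gives at most one zero of $U_x$, which is precisely the claim.

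The main obstacle I anticipate is not the ODE--exponential calculus itself, which is essentially mechanical, but rather verifying that $G(c)\leq \atl$ holds for every $c\in(0,c_0)$: the existing definition of $c_0$ ensures only $G'(c)>1$ and does not automatically preclude overshoot past $\atl$. To close this gap cleanly one must combine the initial value $G(0)=\fotd<\atl$ from Proposition \ref{cor:1/2d0} and \eqref{eq:longineq} with the explicit formula for $G'$ in Lemma \ref{cor:FGC1}, or else impose a further restriction on $c_0$ so that the geometric picture of Figure \ref{fig:Fig1synth} persists uniformly on the parameter interval.
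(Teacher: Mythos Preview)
Your derivation of the explicit form $U(x,c)=\ftla\, x + M(c)e^{x\sta}+N(c)e^{-x\sta}$ and the observation that $U-L$ (equivalently $U_{xx}$) has at most one real zero are correct, and this is essentially the same starting point as the paper. The difference is in how the argument is closed.

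You try to force $U_{xx}>0$ throughout $(F(c),G(c))$ by checking the sign of $U-L$ at both endpoints; for the right endpoint this needs $G(c)\le\atl$. That is a genuine gap for the lemma as stated. The inequality $G(c)<\atl$ appears in Proposition~\ref{prop:monotonicity} only for \emph{sufficiently small} $c$, not for all $c\in(0,c_0)$. The definition \eqref{lem:Gderv} of $c_0$ imposes only $G'(c)>1$, and a quick check with \eqref{eq:qprop1}--\eqref{eq:qprop2} shows that $q(G(c);F(c))$ can remain strictly positive when $G(c)=\atl$ (since $q(F(c);F(c))>0$ and $q_x(\cdot;F(c))>0$ on $(F(c),\atl)$), so the formula \eqref{eq:Gderiv} provides no barrier preventing $G$ from crossing $\atl$ while $G'>1$. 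Thus your first suggested fix does not go through, and your second (shrinking $c_0$) changes the statement being proved.

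The paper instead extracts from the definition of $c_0$ precisely the boundary datum that is guaranteed: from $G'(c)>1$ and \eqref{eq:Gderiv} one gets $q(G(c);F(c))>0$, and then \eqref{eq:C15} yields $U_x(G(c),c)>0$. It then argues that if $U_x$ has two real zeros the sign constraints force $C(c)<0$, $D(c)>0$, making $U_x$ strictly concave; since $U_x(G(c),c)>0$, the point $G(c)$ lies between the two zeros, so at most one of them can fall in $(F(c),G(c))$. Your route can be repaired in the same spirit: with $(U-L)(F(c),c)>0$ and at most one zero of $U-L$, either $U$ is strictly convex on the whole interval, or $U_{xx}$ changes sign exactly once from $+$ to $-$, so $U_x$ is unimodal; combining this with $U_x(G(c),c)>0$ (rather than with $(U-L)(G(c),c)\ge 0$) gives at most one zero of $U_x$ in $(F(c),G(c))$ without any hypothesis on the size of $G(c)$.
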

\begin{proof} Let $c \in (0,c_0)$.
From part 2 of Proposition \ref{existencegeometric}, rewriting the common tangent as
\begin{equation}
W(y;c) = H(\hyt(c);c) + (y - \hyt(c))H_y(\hyt(c);c), 
\end{equation}
we obtain 
\begin{equation}
W_c(y;c) = H_c (\hyt(c);c) + (y - \hyt(c))\lb H_{yy}(\hyt(c);c)\hyt'(c) + H_{yc}(\hyt(c);c)\rb
\end{equation}
for $y \in (\hyo(c), \hyt(c))$. It is therefore clear that $c \mapsto W(y;c)$ (and hence also $x \mapsto \tilde V(x;c)$, by Proposition \ref{prop:DayKar}) is continuously differentiable across the right-hand moving boundary, and also across the left-hand moving boundary (the $c$ derivatives vanish here). Moreover $W_c$ is uniformly continuous in an open neighbourhood of $(\hyt(c),c)$, so we conclude that $W$ is continuously differentiable in this open neighbourhood; and similarly at $(\hyo(c),c)$. Differentiating \eqref{eq:Qdef} we therefore obtain
\begin{eqnarray}
U(x,c) &=& \ftla x + C(c)e^{x\sta} + D(c)e^{-x\sta}, \label{eq:Uexp} \\
U(G(c),c) &=& 1, \quad U (F(c),c) = 2 \d F(c) \in (0,1), \label{eq:Ubdry}
\end{eqnarray}
for some functions $C$ and $D$.
For fixed $c$, note that $e^{x\sta} U_x(x,c)$ is quadratic in the variable $X=e^{x\sta}>0$. In order for $U_x(\cdot,c)$ to have two zeroes, both of the following roots must therefore be strictly positive:
\[
X_\pm: = \frac 1 {C(c)}\left(-\frac{\la}{\a\sta} \pm \sqrt{\frac{ \la^2}{2 \a^3} + C(c) D(c)}
\right).
\]
This in turn can occur only if both $C(c)$ and $C(c)D(c)$ are strictly negative, in which case \eqref{eq:Uexp} gives that $U_x(\cdot,c)$ is strictly concave. In this case, note from 
\eqref{eq:C15} that $U_x(G(c);c)$ and $q(G(c);F(c))$ have the same sign. It then follows from the definition of $c_0$ in  \eqref{lem:Gderv} that
\begin{equation}\label{eq:Uxpos}
U_x(G(c);c) > 0, \qquad c \in (0,c_0),
\end{equation}
and the required result follows.
\end{proof}\hfill$\Box$

\begin{Theorem} Define $\tilde V(\cdot;0)= \tilde V_0(\cdot)$. Then the even (in $x$) extension of the function $\tilde V$  to $\R \times [0,c_0)$, the constant $c_0$, and the moving boundaries $F$ and $G$ (with $F(0)=G(0)=\fotd$)
 satisfy the conditions of Theorem \ref{th:ver}.
\end{Theorem}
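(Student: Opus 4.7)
The plan is to verify each hypothesis of Theorem \ref{th:ver} with $f=F$, $g=G$ and $\tilde Q$ equal to the even (in $x$) extension of $\tilde V$. The regularity of $F$ and $G$ on $[0,c_0)$ (continuity, $C^1$ on $(0,c_0)$, $F'\leq 0$, $G'>0$, and $F(0)=G(0)=1/(2\d)$) is assembled from Propositions \ref{cor:1/2d0}, \ref{cor:1/2d} and \ref{prop:monotonicity}, and $F(c)<G(c)$ for $c>0$ is a consequence of $G'>1$ together with $F'\leq 0$. Turning to $\tilde V$, smoothness inside each piece of \eqref{eq:Qdef} is explicit, the $C^1$-fit across $x=F(c)$ and $x=G(c)$ is the conclusion of Proposition \ref{existencegeometric}, and the only surviving second-derivative discontinuity is at $x=f_0+c$ inherited from $\tilde V_0$. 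The even extension is smooth at $x=0$ because $\tilde V(x;c)=\d x^2$ on a neighbourhood of $0$. Continuous differentiability in $c$ across the two free boundaries was established inside the proof of Lemma \ref{lem:atleastone} using the double-tangency relations, and is immediate elsewhere from the closed-form expressions. The growth condition \eqref{eq:42} follows because on each piece $\tilde V_x$ is linear in $x$ plus a bounded or exponentially decaying term.

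The complementarity identity \eqref{eq:46} holds piecewise: on the stopping region the first factor vanishes, on the continuation region the third factor vanishes since $V^*$ solves $(\cL-\a)\tilde V+\la x^2=0$ by construction, and on the action region a direct computation from $\tilde V=\tilde V_0(x-c)+c$ gives $\tilde V_x+\tilde V_c\equiv 1$, so the second factor vanishes. Condition \eqref{eq:43} reduces to $V\leq h_l$, which follows from $V=\phi_\a W\leq \phi_\a H=h\leq h_l$ via Proposition \ref{prop:DayKar} and \eqref{eq:hdef}, using the translation \eqref{def-V}. Condition \eqref{eq:44} requires more care: on the stopping region $U=2\d x\leq 2\d F(c)\leq 1$ because $F(0)=1/(2\d)$ and $F'\leq 0$, and on the action region the computation above gives $U\equiv 1$. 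On $(F(c),G(c))$ one has $U(F(c),c)=2\d F(c)\leq 1$ and $U(G(c),c)=1$, while Lemma \ref{lem:atleastone} says $x\mapsto U(x,c)$ admits at most one turning point on this interval; the same lemma's proof also gives $U_x(G(c),c)>0$ for $c\in(0,c_0)$ via \eqref{eq:C15} and \eqref{lem:Gderv}. Hence any such turning point is forced to be a local minimum, and $U\leq\max\{U(F(c),c),U(G(c),c)\}=1$ throughout.

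The main obstacle is the HJB inequality \eqref{eq:45}. On the continuation region equality holds by construction; on the stopping region the inequality reads $\d-(\a\d-\la)x^2\geq 0$ on $[0,F(c)]$, which holds because $\hyo(c)<\Psi(f_0)$ (Proposition \ref{existencegeometric}) yields $F(c)<f_0$, and Lemma \ref{lem:Hlprops} (through Lemma \ref{lem:geomlemma}) expresses the convexity of $H_l$ on $[1,\Psi(f_0)]$ as precisely the positivity of $\d-(\a\d-\la)x^2$ on $[0,f_0]$. On the action region, write $y=x-c$ and split at $y=f_0$: when $y\geq f_0$, the ODE $(\cL-\a)\tilde V_0+\la y^2=0$ simplifies the residual to $c(2\la x-\la c-\a)$, which is strictly positive since $x\geq f_0+c$ and $f_0>\a/(2\la)$ by \eqref{eq:longineq}; when $0\leq y<f_0$ (so $G(c)\leq x<f_0+c$) one has $\tilde V_0(y)=\d y^2$ and the residual becomes the concave quadratic $\d-\a\d(x-c)^2+\la x^2-\a c$, whose unique maximiser $x^*=\a\d c/(\a\d-\la)$ lies strictly below $G(c)$ for $c$ small, so the residual is decreasing on $[G(c),f_0+c]$. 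A direct computation shows its left-limit at $(f_0+c)^-$ exceeds the right-limit at $(f_0+c)^+$ by $(\a\d-\la)(1/\a+2f_0/\sta)>0$, so this left-limit is strictly positive and the residual remains positive throughout $[G(c),f_0+c)$. The hardest steps are thus this action-region case split, which must respect the second-derivative jump at $x=f_0+c$, and the continuation-region analysis of $U$, which leans on the structural Lemma \ref{lem:atleastone} together with the sign of $U_x(G(c),c)$.
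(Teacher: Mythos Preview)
Your argument follows the same overall structure as the paper's, and most of it is correct, but there is a gap in your verification of \eqref{eq:45} on the interval $(G(c),f_0+c)$. You compute the residual there as the concave quadratic $R(x,c)=\d-\a\d(x-c)^2+\la x^2-\a c$ with maximiser $x^*=\a\d c/(\a\d-\la)$, and assert that $x^*<G(c)$ ``for $c$ small'' so that $R$ is decreasing on $[G(c),f_0+c]$ and positivity at the right endpoint suffices. However, the theorem must be established for every $c\in(0,c_0)$, where $c_0$ is the fixed constant of Proposition \ref{prop:monotonicity}; nothing proved so far forces $c_0$ below the threshold $(\a\d-\la)/(2\la\d)$ that the inequality $x^*\leq c+\tfrac{1}{2\d}<G(c)$ would require. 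If instead $x^*>G(c)$ for some $c\in(0,c_0)$, then $R$ is increasing near $G(c)$ and you would also need the left-endpoint value $R(G(c)^+,c)\geq 0$, which involves the jump in $\tilde V_{xx}$ across $G(c)$ and which you do not address.

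The paper closes this region by differentiating in $c$ rather than in $x$: one has $R_c(x,c)=\a(2\d(x-c)-1)>0$ on $(G(c),f_0+c)$ precisely because $G'(c)>1$ on $(0,c_0)$ together with $G(0)=\tfrac{1}{2\d}$ gives $x>G(c)>c+\tfrac{1}{2\d}$, and similarly $R_c(x,c)=2\la(x-c-\a/(2\la))>0$ for $x>f_0+c$ by \eqref{eq:longineq}. Since $R(x,0)\geq 0$ is read off directly from \eqref{eq:cond69}, integrating $R_c$ in $c$ yields $R(x,c)\geq 0$ across the entire action region for every $c\in(0,c_0)$, with no smallness restriction on $c$. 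Your concavity-in-$x$ argument could be repaired by also checking the left endpoint, but as written it is incomplete.
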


\begin{proof}
We first note that this extension of $\tilde V$ is smooth across the boundary $x=0$ (where it is locally a quadratic in $x$). We now proceed to establish the regularity of the function $(x,c) \mapsto \tilde V(x;c)$.

It should now be clear from Figure \ref{fig:Fig1synth} that for each $x \geq 0$ we have
\begin{equation}
\lim_{c \to 0}V(x;c) = V(x;0) = V_0(x). \label{eq:cont2}
\end{equation}

This can be proved as follows: as $c \to 0$, $H_r(\cdot;c)$ converges pointwise to the minorant $W_0(\cdot)$ shown in Figure \ref{fig:1}  (this follows from the proof of Lemma \ref{lem:minorant}).
It is then easy to see 
that $H(\cdot;c):=H_l(\cdot) \wedge H_r(\cdot;c)$ also converges pointwise to $W_0(\cdot)$. Since the free boundaries (Figure \ref{fig:Fig1synth}, filled circles) converge to their common limit (filled square) by Lemma \ref{cor:1/2d0}, it follows that the minorant $W(\cdot;c)$ of $H(\cdot;c)$ also converges pointwise to $W_0(\cdot)$. Equation \eqref{eq:cont2} now follows by the usual transformation (Proposition \ref{prop:DayKar}).

The fact that $\tilde V \in  C^1(\R_+ \times (0,c_0))$ was established in the proof of Lemma \ref{lem:atleastone}. The existence (almost everywhere) and local boundedness of the $x$ derivatives, the growth condition \eqref{eq:42}, and the equality \eqref{eq:46} follow from the explicit expressions for $\tilde V(x;c)$ obtained above. 
The bound \eqref{eq:43} also follows immediately from the definition of problems \eqref{def-V}-\eqref{eq:hldef}.
For condition \eqref{eq:44}, the positivity of $U_x(G(c);c)$ from \eqref{eq:Uxpos} and
Lemma \ref{lem:atleastone} (cf. \eqref{eq:Ubdry}) ensure that $U(x,c) \leq 1$ for $x \in (F(c),G(c))$ and condition \eqref{eq:44} follows.

For \eqref{eq:45} we have 
\begin{eqnarray}
R( x, c)&:=&\frac 1 2 \tilde V_{xx}( x; c)+\la  x^2-\a \tilde V( x; c) \nonumber \\
&=& \left\{\begin{array}{ll} \d - (\a \d - \la)x^2 > \d - (\a \d - \la)f_0^2 > 0, & x \in [0,F(c)), \\
0, & x \in (F(c),G(c)), \\
 \d + \la  x^2 - \a(\d( x- c)^2+ c),&  x \in (G(c),f_0+c),
\\
2 \la c \lb x - \lb \frac c 2 + \atl \rb
\rb \geq 0, & x > f_0+c,
\end{array}\right. \label{eq:cond69}
\end{eqnarray}
and so when $c \in (0,c_0)$ (and hence $\fotd+ c<G( c)$) we have
\begin{eqnarray}
R_c( x, c)
&=& \left\{\begin{array}{ll} 2 \la  \lb x - \lb c + \atl \rb
\rb >0, & x > f_0+c,
\\ \a(2\d( x- c)-1) > 0,&  x \in (G(c),f_0+c).
\end{array}\right. \label{eq:dRdc}
\end{eqnarray} 
The nonnegativity of $R(\cdot,0)$ follows as in \eqref{eq:cond69}.
Then from \eqref{eq:dRdc} 
we have
\[
R( x,  c) = R( x,0) + \int_0^{ c} R_c( x,u)du \geq 0,
\]
as required. 
\end{proof}
\hfill$\Box$

\section{Acknowledgements}
The author was supported by EPSRC grant EP/K00557X/1 and would like to thank Tiziano De Angelis and Goran Peskir for several useful discussions and comments which significantly improved an early draft of the paper. 
\begin{appendix}

\section{Derivation of \eqref{eq:lx} and \eqref{eq:hall3}}
\label{app:hall}
We include the following derivations for completeness.
Differentiation of \eqref{eq:th3} and \eqref{eq:th4} yields
$$
e^{x\sta}\pd {\tilde \cH_3} x (x,c)= - e^{-x\sta}\pd {\tilde \cH_4} x (x,c),
$$
and writing $H$ for $h_2 \circ h_1^{-1}$ we have $H'(w)=-e^{2h_1^{-1}(w)\sta}$. 
Differentiating \eqref{eq:Ldot} then yields \eqref{eq:lx} as required. A further differentiation and the use of \eqref{eq:hall1}-\eqref{eq:hall2} also gives
\begin{equation}\label{eq:fonz}
(L_x+L_c)(x,c)= \sta \lb \tilde \cH_4 (x,c) - \tilde \cH_3 (x,c) e^{2z \sta} \rb + h_3(x) e^{2z \sta} - h_4(x),
\end{equation}
where $z=h_1^{-1}(\tilde \cH_3(x,c))$.
For $c \in (0,c_2)$ we have $\tilde \cH_3(G(c),c)=h_1(F(c))$ and $\tilde \cH_4(G(c),c)=h_2(F(c))$. Evaluating \eqref{eq:fonz} at $x=G(c)$, the required identity then follows from \eqref{eq:qdef}.

\end{appendix}

\end{document}